\documentclass[11pt]{article}

\usepackage[T1]{fontenc}
\usepackage[utf8]{inputenc}
\usepackage{lmodern}
\usepackage{amsmath,amssymb,amsthm,mathtools}
\usepackage{microtype}
\usepackage[a4paper,margin=1in]{geometry}
\usepackage[colorlinks=true,linkcolor=blue,citecolor=blue,urlcolor=blue]{hyperref}

\newtheorem{theorem}{Theorem}[section]
\newtheorem{lemma}[theorem]{Lemma}
\newtheorem{proposition}[theorem]{Proposition}
\newtheorem{corollary}[theorem]{Corollary}
\newtheorem{remark}[theorem]{Remark}

\newcounter{maintheorem}
\renewcommand{\themaintheorem}{\arabic{maintheorem}}

\newenvironment{maintheorem}
{\refstepcounter{maintheorem}\par\medskip
 \noindent{\bf Theorem \themaintheorem.}\it}
{\par\medskip}

\newcommand{\F}{\mathbf F_2}
\newcommand{\wt}{\operatorname{wt}}
\newcommand{\supp}{\operatorname{supp}}
\newcommand{\Aut}{\operatorname{Aut}}
\newcommand{\Stab}{\operatorname{Stab}}
\newcommand{\St}{\operatorname{St}}
\newcommand{\PG}{\operatorname{PG}}
\newcommand{\PGL}{\operatorname{PGL}}
\newcommand{\punct}{\operatorname{punct}}
\newcommand{\1}{\mathbf 1}
\newcommand{\vareps}{\varepsilon}

\title{The unique self-dual binary code of length~26 \\ with minimum weight~6}

\author{Gerald H\"ohn\thanks{Department of Mathematics, Kansas State University. E-mail: \texttt{gerald@monstrous-moonshine.de}.}}

\date{}

\begin{document}
\maketitle

\begin{abstract}
The binary Type~I self-dual $[26,13,6]$ code is classical.  We give what appears to be the first direct non-computational proof of its uniqueness.  We first determine the weight enumerators of the code and its shadow.  Degree-one harmonic MacWilliams identities supply the required $1$-designs.  Elementary intersection counts then show that the two minimal half-shadows each contain $13$ words; these words label the $26$ coordinates as $13$ points and $13$ lines, and the two shadow classes become the point-stars and line-stars.  From this structure we give two uniqueness proofs: one reconstructs the projective plane of order~$3$ and the plane code, including the full automorphism group $\PGL(3,3)\rtimes C_2$; the other deletes an intrinsic flag, obtains the odd Golay code together with a deep hole coset, and reconstructs the length-$26$ code and the size of the automorphism group from this coset datum.  Thus the natural length-$24$ object behind the code is the odd Golay code together with its unique orbit of deep hole cosets.
\end{abstract}


\section{Introduction}

A binary linear $[n,k,d]$ code is a $k$-dimensional subspace $C \leq \F^n$ whose nonzero words have minimum Hamming weight~$d$. 
It is \emph{self-dual} if $C=C^\perp$ with respect to the standard dot product
\[
        c\cdot d=\sum_{i=1}^n c_i d_i\in\F.
\]
Since $c\cdot c=\wt(c)\pmod 2$, every binary self-dual code is even.  A self-dual binary code is \emph{Type~II} if every codeword has weight divisible by $4$, and \emph{Type~I} otherwise.  For a Type~I code $C$ we write
\[
        C_0=\{c\in C:\wt(c)\equiv0\pmod4\}
\]
for the doubly-even subcode and
\[
        S=C_0^\perp\setminus C
\]
for the shadow.

\smallskip

The Type~I self-dual binary code of length $26$ and minimum weight $6$ has been known for a long time.  The surrounding enumeration program goes back to Pless--Sloane and Conway--Pless, and the revised enumeration of Conway, Pless, and Sloane records the uniqueness statement.  Pless obtained the code through the child construction from the doubly-even self-dual codes of length $32$; this is a highly structured enumeration proof rather than an intrinsic length-$26$ argument.    
Harada's weighing-matrix construction and Buyuklieva's circulant generator matrix give explicit construction data.  On the geometric side, Glynn's construction of self-dual codes from projective planes of odd order gives the same code for the plane of order~$3$; Dougherty later gave a related formulation. 
See~\cite{PlessSloane1975,ConwayPless1980,Pless1978,CPS1992,Harada1997,Buyuklieva1999,Glynn1991,Dougherty2005}.

\begin{maintheorem}\label{thm:main}
Up to equivalence there is a unique Type~I self-dual binary code of length $26$ and minimum weight $6$.  Its automorphism group is isomorphic to
\[
        \PGL(3,3)\rtimes C_2
\]
and has order $11232$.
\end{maintheorem}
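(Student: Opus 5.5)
The plan follows the route outlined in the abstract.

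\emph{Step 1: weight enumerators.} By Gleason's theorem for Type~I codes, the weight enumerator of $C$ is a polynomial in $x^2+y^2$ and $x^2y^2(x^2-y^2)^2$. Imposing $A_2=A_4=0$ leaves a single free parameter. Passing to the shadow enumerator $S(x,y)=W_C\bigl(\tfrac{x+y}{\sqrt2},\,i\tfrac{x-y}{\sqrt2}\bigr)$, shadow weights are $\equiv 1\pmod 4$ for $n=26$, and all coefficients must be nonnegative integers. I expect this to force $S$ to begin with a term $26y^5$ (equivalently $A_6=52$), since the minimal shadow weight is $5$ and shadow words of weight $5$ form a structure of size $26$.

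Recall that $C_0^\perp/C_0\cong \F^2\times\F^2$ has four cosets: $C_0$, $C\setminus C_0$, and two shadow halves $S_1,S_2$. The sum of two weight-$5$ words in the same half lies in $C_0$, so it has weight $\ge 8$. Hence two such words meet in at most one coordinate. Words in different halves sum into $C\setminus C_0$, so they have weight $\ge 6$ and meet in at most two coordinates.

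\emph{Step 2: designs and the incidence structure.} Apply the degree-one harmonic MacWilliams identities (Bachoc's harmonic weight enumerators) to $C$ and to $S$. This shows that the weight-$5$ shadow words in each half, and the weight-$6$ codewords, cover each coordinate equally often. Counting pairs then gives the following.
\begin{itemize}
\item Each half contains exactly $13$ weight-$5$ words.
\item Within a half these $13$ words are pairwise disjoint outside a fixed pattern. Each of the $26$ coordinates lies in a number of them forced by the $1$-design count, namely $13\cdot5/26=5/2$ on average.
\end{itemize}
Since $5/2$ is not an integer, the coordinates must split into two types of $13$ each. I would show that a word in $S_1$ has one ``self'' coordinate together with $4$ coordinates of the opposite type. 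Naming these types points and lines turns the words of $S_1$ into point-stars $\{p\}\cup\{\ell\ni p\}$, and the words of $S_2$ into line-stars. The intersection bounds then say that two points lie on at most one common line and two lines meet in at most one point. With $13$ points, $13$ lines and $4$ points per line, this forces a projective plane of order~$3$, which is unique; so the incidence structure is $\PG(2,3)$. This step is the main obstacle: showing that the weight-$5$ words in a half carry this star structure and not some other configuration. I would attack it first through the exact intersection distributions determined by the harmonic identities.

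\emph{Step 3: reconstruction and automorphisms.} The stars span $C_0^\perp$, and $C$ is recovered as the appropriate index-$2$ subspace determined by the incidence structure (Glynn's plane code). Hence $C$ is unique up to equivalence. Any automorphism of $C$ preserves the shadow, and therefore preserves the set of $26$ star words. It either fixes both halves, inducing a collineation of $\PG(2,3)$, or swaps them, inducing a correlation. Conversely, collineations and the polarity (transpose of the incidence matrix) preserve the plane code. This gives $\Aut(C)\cong\PGL(3,3)\rtimes C_2$, of order $2\cdot 5616=11232$. The map $\Aut(C)\to\Aut(\text{plane with dualities})$ is injective because a permutation fixing every star fixes every coordinate.
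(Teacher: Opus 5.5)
Your architecture matches the paper's: enumerators, harmonic $1$-designs, half-shadows as stars, $\PG(2,3)$, Glynn's code, and collineations plus dualities. Step~3 is fine once Step~2 is in place. There are, however, two genuine gaps.

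\emph{Step~1 does not exclude the second enumerator.} Gleason's theorem and the shadow transform leave one parameter $t$. Integrality and nonnegativity only give $t\in\{0,\tfrac12\}$. For $t=\tfrac12$ every coefficient is a nonnegative integer: $A_6=20$, $A_8=550$, $A_{10}=1025$, $A_{12}=2500$, and the shadow has one word of weight $1$, $20$ of weight $5$, $1575$ of weight $9$ and $5000$ of weight $13$. Your stated reason, that the minimal shadow weight is $5$, is exactly what has to be proved.

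The paper needs a separate argument here. If $e_1\in S$, then coordinate $1$ vanishes on $C_0$ and equals $1$ on $C\setminus C_0$, so all $20$ weight-$6$ words pass through it. Puncture at coordinate $1$. The dual of the punctured code is $C_0$ with its zero coordinate deleted, which has weights in $\{8,12,16,20\}$ and dual distance $5$. The degree-one harmonic identity then makes the $20$ punctured blocks a $1$-design on $25$ points with $r=4$. Two lifts sum into $C_0$, so two blocks meet in exactly one point, and $19=5(r-1)=15$ is a contradiction.

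\emph{The central part of Step~2 is missing and rests on a false claim.} The weight-$5$ words of a single half are not a $1$-design; your own average $5/2$ shows this. Only the union of all $26$ is a $1$-design (replication $5$), and likewise the $52$ weight-$6$ words (replication $12$). Both facts come from applying the harmonic identity to $C_0$. Non-integrality of $5/2$ does not by itself imply a split into two types of $13$ coordinates each.

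You also need exact intersection sizes, not just bounds. Since $\wt(x+y)=10-2k$ is $\equiv0\pmod4$ within a half, $k=1$; since it is $\equiv2\pmod4$ across halves, $k\in\{0,2\}$. Let $m$ be the size of one half and $d_1$ the number of words of the other half meeting a fixed word of the first in two coordinates. Then $25=5+(m-1)+2d_1$ and its analogue give $m(21-m)=(26-m)(m-5)$, hence $m=13$ and $d_1=4$.

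The key missing idea is a bijection between these $52$ cross pairs and the $52$ weight-$6$ words. For a weight-$6$ word $c$, each $u+c$ (with $u\in S$ of weight $5$) lies in the shadow, so $|B_u\cap B_c|\in\{1,3\}$. The total of these intersections is $30$, which forces exactly two $u$ with value $3$, namely $u$ and $u+c$, one in each half.

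Let $r(i)$ be the number of words of the first half containing coordinate $i$. Double counting pair incidences at $i$ gives $20=12+2r(i)(5-r(i))$, so $r(i)\in\{1,4\}$. Then $|P|+4|L|=65$ and $|P|+|L|=26$ give $13$ coordinates of each type. Finally, $12=\sum_{i\in B_x}(r(i)-1)$ shows each word of the first half has exactly one coordinate with $r=1$. Only after this do the point-stars, line-stars and the plane appear.
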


The purpose of this note is to give a direct length-$26$ proof of this theorem.  To the author's knowledge, it is the first direct non-computational proof of the uniqueness of this code.

\smallskip

The shadow formalism for singly-even self-dual codes was introduced by Conway and Sloane and further developed by Rains; for general background on self-dual codes and shadows, see also the survey of Rains and Sloane \cite{ConwaySloaneBound,Rains1998,RainsSloaneSelfDual}.  The design input comes from the Assmus--Mattson--Delsarte--Venkov circle of ideas.  Venkov used harmonic-weighted theta series to force spherical designs in extremal lattices, while Bachoc's harmonic weight enumerators provide the code-theoretic counterpart and a harmonic form of the Assmus--Mattson argument \cite{AssmusMattson1969,Delsarte1973,Venkov,Bachoc1999}.  The same extremal-design principle extends to conformal designs for vertex operator algebras \cite{HoehnConformal,HoehnShadow}; for the shadow setting see also Bachoc and Gaborit \cite{BachocGaborit2004}.

Starting with an arbitrary Type~I self-dual $[26,13,6]$ code $C$, Section~\ref{sec:enumerator} determines the weight enumerators of $C$ and its shadow, excludes the exceptional shadow case, and applies Bachoc's degree-one harmonic weight-enumerator identity to obtain the required $1$-designs.  Section~\ref{sec:projective-plane} uses elementary intersection counts to show that the two minimal half-shadows have $13$ words each, label the coordinates as $13$ points and $13$ lines, and identify the two shadow classes with point-stars and line-stars.  The resulting incidence structure is a projective plane of order~$3$, hence isomorphic to $\PG(2,3)$, and its intrinsic flags correspond bijectively to the weight-$6$ words.

Section~\ref{sec:plane-proof} recalls Glynn's plane code, proves that the recovered code is this code, and identifies the full automorphism group.  Section~\ref{sec:golay-proof} gives the structural reason for the existence of the code.  Deleting an intrinsic flag does not lead to the even Golay code, but to the odd Golay code $\mathcal G_{24}^{\mathrm{odd}}$, together with one of its deep hole cosets.  Conversely, adjoining two coordinates to $\mathcal G_{24}^{\mathrm{odd}}$ by means of a deep hole coset reconstructs a Type~I self-dual $[26,13,6]$ code.  Since the deep hole cosets form a single orbit under the sextet group, this construction gives a unique code.  In this sense the length-$26$ code is naturally the two-coordinate extension of the odd Golay code determined by its deep hole coset geometry.

\smallskip

This last viewpoint is also a code-theoretic analogue of the rank-$24$ deep hole description in Borcherds' thesis, where the unique extremal odd unimodular lattice in rank $26$ is found and studied~\cite{BorcherdsThesis}.

It would also be natural to give a direct mass calculation for Type~I self-dual binary codes of length $26$ without weight-$2$ or weight-$4$ codewords, analogous to King's mass calculation for extremal doubly-even self-dual codes of length $40$~\cite{King01}.  Such a calculation should show that the projective-plane code already exhausts the mass.


\section{Weight enumerators and shadow designs}\label{sec:enumerator}

Let $C\le \F^{26}$ be Type~I self-dual with $d(C)=6$, let $C_0$ be its doubly-even subcode, and let $S=C_0^\perp\setminus C$ be its shadow.  Since $C_0$ has index $2$ in $C$ and index $4$ in $C_0^\perp$, we may write
\[
        C_0^\perp=C_0\sqcup C_1\sqcup C_2\sqcup C_3,
        \qquad C=C_0\sqcup C_2,
        \qquad S=C_1\sqcup C_3.
\]
For every Type~I self-dual code, the quotient $C_0^\perp/C_0$ is a four-group.  Thus the sum of two vectors in the same shadow coset lies in $C_0$, whereas the sum of vectors in different shadow cosets lies in $C_2$.  We use this elementary coset arithmetic throughout.
For a binary word $u$, write $B_u=\supp(u)$, and write $e_i$ for the coordinate unit vector at $i$.
We write $W_D(x,y)$ for the ordinary weight enumerator of a code $D$.

\begin{proposition}\label{prop:parametric-enumerators}
There is a rational number $t$ such that
\[
\begin{aligned}
W_C(x,y)={}&x^{26}+y^{26}
 +(52-64\,t)\,(x^{20}y^6+x^6y^{20})
 +(390+320\,t)\,(x^{18}y^8+x^8y^{18})\\
& +(1313-576\,t)\,(x^{16}y^{10}+x^{10}y^{16})
 +(2340+320\,t)\,(x^{14}y^{12}+x^{12}y^{14}),
\end{aligned}
\]
and
\[
\begin{aligned}
W_S(x,y)={}&2\,t\,(x^{25}y+xy^{25})
 +(26-12\,t)\,(x^{21}y^5+x^5y^{21})\\
& +(1560+30\,t)\,(x^{17}y^9+x^9y^{17})
 +(5020-40\,t)\,x^{13}y^{13}.
\end{aligned}
\]
Moreover, $t\in\{0,\,\tfrac12\}$.
\end{proposition}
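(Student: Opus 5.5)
The approach is the standard Gleason-plus-shadow method of Conway and Sloane. By Gleason's theorem for Type~I codes, $W_C$ is a polynomial in $g_1=x^2+y^2$ and $g_2=x^2y^2(x^2-y^2)^2$, so
\[
W_C=\sum_{j=0}^{3}a_j\,g_1^{13-4j}g_2^{j},
\]
with four unknowns $a_0,\dots,a_3$. The conditions $A_0=1$ and $A_2=A_4=0$ (from $d=6$) fix $a_0,a_1,a_2$ and leave one free parameter, which we may take to be $a_3$. The shadow enumerator is obtained from Rains' formula $W_S(x,y)=W_C\bigl(\tfrac{x+y}{\sqrt2},\tfrac{i(x-y)}{\sqrt2}\bigr)$. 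Under this substitution $g_1\mapsto 2xy$ and $g_2\mapsto -\tfrac14(x^4-y^4)^2$, so
\[
W_S=\sum_j a_j(-1)^j 2^{13-6j}(xy)^{13-4j}(x^4-y^4)^{2j}.
\]
The term $j=3$ contributes $-2^{-5}a_3\,xy(x^4-y^4)^6$. Its $x^{25}y$ coefficient $-a_3/32$ is the only one that can produce weight~$1$ in $S$. I will define $t$ by declaring $2t$ to be the number of weight-$1$ shadow vectors, i.e.\ $2t=-a_3/32$. Rewriting everything linearly in $t$ then gives the displayed formulas, and it remains to check the coefficients $52-64t$, $26-12t$, and so on. Since $n=26\equiv2\pmod8$, shadow weights are $\equiv1\pmod4$, which matches the displayed monomials. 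This step is routine expansion, and I will only verify a few coefficients (for example, that the total count is $2^{13}$ for both $W_C$ and $W_S$, using $|S|=|C|$).

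For the restriction $t\in\{0,\tfrac12\}$, the counting gives that $2t=B_1$ is a nonnegative integer, so $t\ge0$ and $t\in\tfrac12\mathbb Z$. Next, suppose $S$ contains two weight-$1$ vectors $e_i,e_j$. If they lie in the same shadow coset $C_1$ or $C_3$, then $e_i+e_j\in C_0$, giving a codeword of weight $2<6$, which is impossible. If they lie in different shadow cosets, then $e_i+e_j\in C_2\subset C$, again of weight~$2$ and again impossible. Hence $B_1\le1$, so $t\in\{0,\tfrac12\}$. As a consistency check, nonnegativity of the other coefficients, such as $52-64t\ge0$, is satisfied by both values.

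The main obstacle is purely computational: carrying out the Gleason expansion and the shadow substitution correctly, with the normalization of the free parameter chosen so that the shadow coefficient of $x^{25}y$ comes out exactly as $2t$. To keep this manageable, I will first compute $W_S$ in the basis $(xy)^{13-4j}(x^4-y^4)^{2j}$. In that basis the constraints $A_2=A_4=0$ are easier to impose by back-transforming only low-degree information. Equivalently, one can solve directly for the three coefficients fixed by $A_0,A_2,A_4$, then compare the coefficients of $x^{20}y^6$ and $x^{25}y$ to obtain the linear dependence on $t$.
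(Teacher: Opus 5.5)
Your proposal follows the paper's route: Gleason's theorem with $A_0=1$ and $A_2=A_4=0$ leaves one free parameter, then the Conway--Sloane shadow transform identifies $2t$ as the number of weight-$1$ shadow words, and your normalization $2t=-a_3/32$ does reproduce the coefficients $52-64t$, $26-12t$, $1560+30t$ and $5020-40t$. The only difference is the final step, where the paper gets $t\le 13/16$ from $A_6=52-64t\ge 0$ while you show that two weight-$1$ shadow vectors would sum to a weight-$2$ word of $C$; both arguments are valid, and yours does not depend on the computed value of $A_6$.
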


\begin{proof}
By Gleason's theorem for Type~I self-dual binary codes, with $A_i$ denoting the number of codewords of weight $i$, the conditions $A_0=1$ and $A_2=A_4=0$ give the displayed one-parameter weight enumerator.
The standard Conway--Sloane shadow transform gives the second display; see \cite[Ch.~19]{MS} and \cite{ConwaySloaneBound}.  Since $2\,t$ is the number of weight-$1$ shadow words and $A_6=52-64\,t\geq 0$, one has $t\in\{0,\,\tfrac12\}$.
\end{proof}

For a binary code $D\le\F^n$ and an arbitrary vector
\[
        a=(a_1,\,\ldots,\,a_n)\in\mathbb C^n,
        \qquad \sum_{i=1}^n a_i=0,
\]
put $a(X)=\sum_{i\in X}a_i$ and
\[
        W_{D,a}(x,y)=\sum_{d\in D}a(B_d)\,x^{n-\wt(d)}y^{\wt(d)}.
\]
A weight shell of $D$ is a $1$-design precisely when its coefficient in $W_{D,a}$ vanishes for every such $a$.  Bachoc's degree-one harmonic MacWilliams identity \cite{Bachoc1999} states that
\[
        W_{D,a}(x,y)=xy\,Z_{D,a}(x,y),
        \qquad
        Z_{D^\perp,a}(x,y)=-\frac{2}{|D|}Z_{D,a}(x+y,x-y).
\]
This is the harmonic-weight-enumerator form of the strength-one Assmus--Mattson argument \cite{AssmusMattson1969}.

\begin{proposition}\label{prop:four-weight-harmonic}
Let $D\le\F^n$, where $n=25$ or $26$.  If the nonzero weights of $D$ are contained in $\{8,\,12,\,16,\,20\}$ and $D^\perp$ has no nonzero word of weight less than $5$, then every weight shell of both $D$ and $D^\perp$ is a $1$-design.
\end{proposition}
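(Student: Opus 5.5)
We will use Bachoc's identity to turn the hypotheses into a polynomial-degree contradiction for the harmonic enumerator of $D$. Then a vanishing $Z$ gives the $1$-design property on both sides.

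Fix a vector $a\in\mathbb C^n$ with $\sum a_i=0$. The zero word contributes $a(\emptyset)=0$ to $W_{D,a}$, and so does the all-ones word if it lies in $D$ (weight $n$), since $a([n])=0$. As $n\le 26$, weights $8,12,16,20$ are all less than $n$, so $W_{D,a}$ has the form $\alpha_8x^{n-8}y^8+\alpha_{12}x^{n-12}y^{12}+\alpha_{16}x^{n-16}y^{16}+\alpha_{20}x^{n-20}y^{20}$. Dividing by $xy$ gives $Z_{D,a}$, a homogeneous polynomial of degree $n-2$ supported on the four monomials $x^{n-1-w}y^{w-1}$ for $w\in\{8,12,16,20\}$.

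On the dual side, $D^\perp$ has no nonzero words of weight $1,\dots,4$. Its weight-$0$ word contributes nothing, so every monomial of $W_{D^\perp,a}$ has $y$-degree at least $5$. Hence every monomial of $Z_{D^\perp,a}$ has $y$-degree at least $4$. By the identity $Z_{D^\perp,a}(x,y)=-\tfrac{2}{|D|}Z_{D,a}(x+y,x-y)$, the polynomial $Z_{D,a}(x+y,x-y)$ is therefore divisible by $y^4$. Substituting back ($x\mapsto (x+y)/2$, $y\mapsto (x-y)/2$, up to a scalar), this says that $Z_{D,a}(x,y)$ is divisible by $(x-y)^4$.

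So we need to show that a polynomial $P(x,y)=\sum_{w}\alpha_w x^{n-1-w}y^{w-1}$ in the four monomials above, divisible by $(x-y)^4$, must vanish. Dehomogenize with $x=1$. Up to the common factor $y^7$, $P$ becomes $q(z)=\alpha_8+\alpha_{12}z+\alpha_{16}z^2+\alpha_{20}z^3$ with $z=y^4$. The point $y=1$ is a root of multiplicity at least $4$ of $q(y^4)$. Since the map $y\mapsto y^4$ has nonzero derivative at $1$, $z=1$ is a root of $q$ of multiplicity at least $4$. A cubic with a root of multiplicity $4$ is identically zero, so all $\alpha_w=0$.

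An equivalent way to see this is through the four conditions $\sum_w \alpha_w (w-1)^k=0$ for $k=0,\dots,3$, read as a Vandermonde system in the distinct values $7,11,15,19$. This is the only genuinely computational step, and it is where I expect the main obstacle. It is also where the condition ``$<5$'' must match the count of four weights exactly: if $D^\perp$ had words of weight $4$, the divisibility would drop to $(x-y)^3$ and the argument would fail.

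With $Z_{D,a}=0$, every coefficient of $W_{D,a}$ vanishes for every admissible $a$, so each weight shell of $D$ is a $1$-design. The identity then gives $Z_{D^\perp,a}=0$ as well, so each weight shell of $D^\perp$ is also a $1$-design, by the criterion stated before Bachoc's identity.
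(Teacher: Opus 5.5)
Your proof is correct, and its skeleton is the paper's. You fix $a$, write $Z_{D,a}$ in the four monomials $x^{n-1-w}y^{w-1}$, use Bachoc's identity to make the first four coefficients of $Z_{D,a}(x+y,x-y)$ vanish, conclude $Z_{D,a}=0$, and transfer back to $D^\perp$. The only difference is how you show that the four linear conditions are independent.

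The paper just asserts that the $4\times4$ determinant equals $2^{18}$ for $n=25$ and $n=26$. You instead read the conditions as divisibility of $Z_{D,a}$ by $(x-y)^4$ and reduce to a cubic in $z=y^4$ with a fourfold root at $z=1$. Your multiplicity transfer is sound, since $(y^4-1)^m=(y-1)^m(y^3+y^2+y+1)^m$ and the second factor equals $4^m\neq0$ at $y=1$.

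What your route buys is that it is computation-free and visibly independent of $n$; it only needs $n\ge 21$. Your Vandermonde remark also explains the paper's number. The $k$-th coefficient is $\sum_w\alpha_w K_k(w-1;n-2)$, where $K_k$ is the Krawtchouk polynomial. $K_k$ has degree $k$ in $w-1$ with leading coefficient $(-2)^k/k!$. So the determinant is $\prod_{k=0}^{3}\frac{(-2)^k}{k!}=\frac{16}{3}$ times the Vandermonde determinant of $7,11,15,19$, which is $49152$. That gives $2^{18}$ for every $n$. The paper's version is shorter to state but leaves this check to the reader.

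One small slip: the weights $8,\dots,20$ lie below $n$ because $n\ge 21$, not because $n\le 26$. This is harmless, since no word of $D$ has weight $n$ anyway.
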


\begin{proof}
Fix an arbitrary $a=(a_i)$ with $\sum_i a_i=0$.  The weight restriction gives
\[
        Z_{D,a}(x,y)=\sum_{w\in\{8,\,12,\,16,\,20\}}
        \alpha_w\, x^{n-w-1}y^{w-1}.
\]
Because $D^\perp$ has no words of weights $1$, $2$, $3$, or $4$, the first four coefficients of $Z_{D,a}(x+y,x-y)$ vanish.  For $n=25$ and $n=26$ these are four independent linear conditions on the four numbers $\alpha_w$; the corresponding determinant is $2^{18}$ in either case.  Hence $W_{D,a}=0$, and Bachoc's identity also gives $W_{D^\perp,a}=0$.  Since $a$ was arbitrary, all shells of both codes are $1$-designs.
\end{proof}

\begin{proposition}\label{prop:no-half}
The case $t=\tfrac12$ cannot occur.
\end{proposition}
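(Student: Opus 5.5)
The plan is to assume $t=\tfrac12$ and derive a contradiction from the weight-$1$ shadow word together with the coset arithmetic. By Proposition~\ref{prop:parametric-enumerators}, when $t=\tfrac12$ the shadow contains exactly one word of weight~$1$, say $e_i$, and exactly $20$ words of weight~$5$, while $A_6=20$. Say $e_i\in C_1$. Since $e_i\in C_0^\perp$, every doubly-even codeword vanishes at coordinate~$i$.

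\textbf{Step 1: where the weight-$5$ shadow words lie.} Let $s$ be a shadow word of weight~$5$.
\begin{itemize}
\item If $s\in C_1$, then $e_i+s\in C_0$ has weight $4$ or $6$. Weight~$4$ is excluded because $A_4=0$, and weight~$6$ is not doubly even. So $C_1$ contains no weight-$5$ words.
\item Hence $s\in C_3$, and $e_i+s\in C_2$ must have weight~$6$. So $i\notin B_s$.
\end{itemize}
For two distinct such words $s,s'$, the sum $s+s'$ lies in $C_0$ and has weight $10-2|B_s\cap B_{s'}|$. This weight must be a nonzero multiple of~$4$ and at least~$8$. Therefore $|B_s\cap B_{s'}|=1$ for every pair. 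We obtain $20$ five-subsets of the $25$ coordinates other than~$i$, pairwise meeting in exactly one point.

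\textbf{Step 2: the design input.} Let $D$ be $C_0$ punctured at~$i$. This puncturing loses nothing, since $C_0$ vanishes at~$i$. So $D\le\F^{25}$ has dimension~$12$, and its nonzero weights lie in $\{8,12,16,20\}$: by the enumerator there is no weight~$4$ or $24$ in $C_0$.

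Its dual $D^\perp$ is the projection of $C_0^\perp=C\cup S$ away from~$i$. Minimum weight~$\ge5$ follows because:
\begin{itemize}
\item words of $C$ have weight $\ge6$;
\item the weight-$5$ shadow words avoid~$i$ by Step~1;
\item $e_i$ projects to~$0$;
\item every other shadow word has weight $\ge9$.
\end{itemize}
Proposition~\ref{prop:four-weight-harmonic} with $n=25$ then makes the weight-$5$ shell of $D^\perp$ a $1$-design on $25$ points. That shell consists of the $20$ words from Step~1, together with the projections of the weight-$6$ codewords containing~$i$. Each such codeword is $e_i+s$ with $s$ a weight-$5$ shadow word, so it projects exactly onto $B_s$. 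So the shell is just the $20$ blocks $B_s$, and coordinate~$i$ lies in none of them. A $1$-design with $20\cdot5=100$ incidences on $25$ points has replication number $4>0$, a contradiction.

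\textbf{Main obstacle.} The delicate point is Step~2's identification of the weight-$5$ shell of $D^\perp$ and the verification that $D^\perp$ has no words of weight $\le4$. This requires tracking the projections of $C$-words of weight~$6$ through~$i$, and checking that distinct vectors of $C_0^\perp$ differing only by $e_i$ are correctly identified under projection. If this bookkeeping turns out subtler than expected, the fallback is combinatorial: use the replication-number condition from the $1$-design restricted to the $24$ points other than~$i$. Combine it with the exact pairwise intersection~$1$ from Step~1 and the double count
\[
\sum_p \binom{r_p}{2}=\binom{20}{2}.
\]
If all $r_p$ are equal, this already contradicts $\sum_p r_p=100$.
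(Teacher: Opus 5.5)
Your Steps~1 and~2 are correct, and they reproduce the paper's setup. Your $D$ is $C_0$ with coordinate $i$ deleted. Since $C_0^\perp=C\sqcup(C+e_i)$, your $D^\perp$ is just $C$ punctured at $i$. The weight-$5$ shell of $D^\perp$ consists of the $20$ blocks $B_s$, which are exactly the punctures of the $20$ weight-$6$ words, all of which contain $i$. So the bookkeeping you flag as the main obstacle is fine.

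The genuine gap is your final sentence. Proposition~\ref{prop:four-weight-harmonic} is applied with $n=25$, so the $1$-design it yields lives on the $25$ coordinates \emph{other than}~$i$. Coordinate $i$ is not a point of that design, so the fact that no $B_s$ contains $i$ contradicts nothing. Twenty $5$-subsets of a $25$-set covering every point exactly $4$ times is perfectly consistent. You also cannot apply the proposition on all $26$ coordinates instead: $C_0^\perp$ contains $e_i$, so the hypothesis fails there, which is exactly why the puncturing was needed.

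The contradiction must come from combining the design with the intersection number $1$ that you proved in Step~1 but never used in the main line. Fix one block $B_s$. By Step~1, $\sum_{s'\neq s}|B_s\cap B_{s'}|=19$. Counting instead through the five points of $B_s$, with constant replication number $r=4$, gives $5(r-1)=15$. This is precisely the paper's argument.

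Your fallback is the global form of the same count, but as written it is off in two ways. First, there are $25$ points other than $i$, not $24$. Second, $\sum_p r_p=100$ is not contradicted; it simply forces $r_p=4$. The actual contradiction is
\[
\sum_p\binom{r_p}{2}=25\binom{4}{2}=150\neq 190=\binom{20}{2}.
\]
With that correction, the fallback should be your main proof, and the ``coordinate $i$ lies in no block'' step should be dropped.
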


\begin{proof}
Assume that $t=\tfrac12$ and, after permuting the coordinates, let $e_1$ be the unique weight-$1$ word in $S$.  Orthogonality to $C_0$ shows that the first coordinate is $0$ on $C_0$, and hence is constant on $C_2=C\setminus C_0$.  It must be $1$ there, since otherwise $e_1\in C^\perp=C$.  Thus all $20$ weight-$6$ words of $C$ contain coordinate $1$.

Let $C'$ be obtained by puncturing $C$ at coordinate $1$.  Then $C'$ has minimum weight $5$, and its $20$ weight-$5$ words are precisely the punctures of the weight-$6$ words of $C$.  Moreover, $(C')^\perp$ is obtained by shortening $C^\perp=C$ at coordinate $1$.  Since the words of $C$ having first coordinate $0$ are exactly those in $C_0$, we have
$W_{(C')^\perp}(x,y)=x^{-1}W_{C_0}(x,y)$.
The enumerator of $C_0$ is obtained from the formula for $W_C$ in Proposition~\ref{prop:parametric-enumerators}, with $t=\tfrac12$, by retaining the terms whose weights are divisible by $4$.  Hence
\[
\begin{aligned}
        W_{(C')^\perp}(x,y)
        ={}&x^{25}
          +550\,x^{17}y^8
          +2500\,x^{13}y^{12}
          +1025\,x^9y^{16}
          +20\,x^5y^{20},
\end{aligned}
\]
while its dual $C'$ has minimum weight $5$.  Proposition~\ref{prop:four-weight-harmonic}, applied to $D=(C')^\perp$,
therefore shows that the weight-$5$ words of $C'$ form a $1$-design on $25$ points.  Its replication number is
\[
        r=\frac{20\cdot 5}{25}=4.
\]

If $b\ne b'$ are two such words, their lifts lie in $C_2$ and contain coordinate $1$.  Their sum lies in $C_0$, so its weight is divisible by $4$ and at least $8$; hence $|B_b\cap B_{b'}|=1$.  The standard block-intersection identity for a fixed $b$ gives
\[
        19=\sum_{b'\ne b}|B_b\cap B_{b'}|=5(r-1)=15,
\]
a contradiction.
\end{proof}

\begin{corollary}\label{cor:forced}
Every Type~I self-dual binary code of length $26$ and minimum distance $6$ has weight enumerator
\[
\begin{aligned}
W_C(x,y)={}&x^{26}+52\,x^{20}y^6+390\,x^{18}y^8+1313\,x^{16}y^{10}+2340\,x^{14}y^{12}\\
& +2340\,x^{12}y^{14}+1313\,x^{10}y^{16}+390\,x^8y^{18}+52\,x^6y^{20}+y^{26}
\end{aligned}
\]
and shadow enumerator
\[
W_S(x,y)=26\,x^{21}y^5+1560\,x^{17}y^9+5020\,x^{13}y^{13}+1560\,x^9y^{17}+26\,x^5y^{21}.
\]
In particular, the shadow contains exactly $26$ vectors of weight $5$.
\end{corollary}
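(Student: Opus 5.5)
This corollary follows by combining Propositions~\ref{prop:parametric-enumerators} and~\ref{prop:no-half}, so the plan is a substitution rather than a new argument. By Proposition~\ref{prop:parametric-enumerators}, any Type~I self-dual $[26,13,6]$ code $C$ has enumerators of the displayed one-parameter form with $t\in\{0,\tfrac12\}$. Proposition~\ref{prop:no-half} rules out $t=\tfrac12$, so $t=0$.

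Next I substitute $t=0$ into both displays.

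In $W_C$ the coefficients become
\[
52-64t=52,\qquad 390+320t=390,\qquad 1313-576t=1313,\qquad 2340+320t=2340.
\]
Expanding the symmetric pairs $x^{26-i}y^i+x^iy^{26-i}$ gives the stated ten-term polynomial.

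In $W_S$ the coefficients become
\[
2t=0,\qquad 26-12t=26,\qquad 1560+30t=1560,\qquad 5020-40t=5020.
\]
The weight-$1$ and weight-$25$ terms therefore vanish. The remaining terms expand to $26x^{21}y^5+1560x^{17}y^9+5020x^{13}y^{13}+1560x^9y^{17}+26x^5y^{21}$.

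As a sanity check, I would confirm that the totals come out right:
\begin{itemize}
\item $W_C(1,1)=2(1+52+390+1313+2340)=2^{13}$, which equals $|C|$.
\item $W_S(1,1)=2(26+1560)+5020=8192=2^{13}$, which equals $|S|=|C_1|+|C_3|=2\cdot 2^{12}$.
\end{itemize}

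The final sentence of the corollary is the coefficient of $x^{21}y^5$ in $W_S$, namely $26$.

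There is no real obstacle here. The only point needing care is that the $t=\tfrac12$ case has already been excluded, so the parameter is forced; everything else is arithmetic.
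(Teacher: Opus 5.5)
Your proposal is correct and matches the paper's proof exactly: Propositions~\ref{prop:parametric-enumerators} and~\ref{prop:no-half} force $t=0$, and substituting gives both enumerators. Your sanity checks $W_C(1,1)=W_S(1,1)=2^{13}$ are a correct extra confirmation that the paper does not spell out.
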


\begin{proof}
Propositions~\ref{prop:parametric-enumerators} and~\ref{prop:no-half} give $t=0$.
\end{proof}

Let
\[
        M=\{x\in S:\wt(x)=5\},
        \qquad
        \mathcal W=\{c\in C:\wt(c)=6\}.
\]
By Corollary~\ref{cor:forced}, $|M|=26$ and $|\mathcal W|=52$.

\begin{lemma}\label{lem:harmonic-design-consequences}
The blocks $B_x$, $x\in M$, form a $1$-design, and so do the blocks $B_c$, $c\in \mathcal W$.  Each coordinate occurs in exactly five blocks of the first family and twelve of the second.
\end{lemma}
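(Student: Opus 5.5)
The plan is to apply Proposition~\ref{prop:four-weight-harmonic} with $n=26$ and $D=C_0$, the doubly-even subcode, so that $D^\perp=C_0^\perp=C\sqcup S$. The proposition then makes every weight shell of $C_0^\perp$ a $1$-design. It remains to identify the shells of weight $5$ and $6$ with $M$ and $\mathcal W$.

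First I check the two hypotheses.

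\emph{Weights of $C_0$.} By Corollary~\ref{cor:forced}, the nonzero weights of $C$ divisible by $4$ are $8,12,16,20$. Weight $24$ does not occur, and the all-one word has weight $26\equiv 2\pmod 4$, so it lies in $C_2$. Hence the nonzero weights of $C_0$ lie in $\{8,12,16,20\}$.

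\emph{Low weights in $C_0^\perp$.} Since $C_0^\perp=C\sqcup S$, its minimum nonzero weight is the smaller of $d(C)=6$ and the minimum shadow weight. By Corollary~\ref{cor:forced} (that is, $t=0$), the minimum shadow weight is $5$. So $C_0^\perp$ has no nonzero word of weight less than $5$.

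Next I identify the shells. The words of $C$ are even and the words of $S$ have odd weight (all shadow weights in Corollary~\ref{cor:forced} are odd). Therefore the weight-$5$ shell of $C_0^\perp$ is exactly $M$, and the weight-$6$ shell is exactly $\mathcal W$. Both are $1$-designs on $26$ points.

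Finally, the replication numbers come from double counting incidences between blocks and points:
\[
r_M=\frac{26\cdot 5}{26}=5,
\qquad
r_{\mathcal W}=\frac{52\cdot 6}{26}=12.
\]

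I expect no real obstacle. The only points needing care are the determinant condition, which Proposition~\ref{prop:four-weight-harmonic} already handles for $n=26$, and confirming that no weight-$24$ or weight-$4$ words sit in $C_0$; both follow directly from the forced enumerator.
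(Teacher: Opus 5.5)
Your proposal is correct and is the paper's proof: apply Proposition~\ref{prop:four-weight-harmonic} to $D=C_0$ using the $t=0$ enumerators, then obtain the replication numbers $5$ and $12$ by double counting. You also spell out why the weight-$5$ and weight-$6$ shells of $C_0^\perp=C\sqcup S$ are exactly $M$ and $\mathcal W$ (shadow weights are odd, codewords even), which the paper leaves implicit.
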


\begin{proof}
For $t=0$ one has
\[
        W_{C_0}(x,y)=x^{26}+390\,x^{18}y^8+2340\,x^{14}y^{12}
        +1313\,x^{10}y^{16}+52\,x^6y^{20}.
\]
Moreover, $C_0^\perp=C\sqcup S$ has minimum weight $5$.  Proposition~\ref{prop:four-weight-harmonic}, applied to $D=C_0$, shows that all shells of $C_0^\perp$ are $1$-designs.  The two replication numbers are
\[
        \frac{26\cdot 5}{26}=5,
        \qquad
        \frac{52\cdot 6}{26}=12.
\]
\end{proof}

Put
\[
        M_1=M\cap C_1,
        \qquad
        M_3=M\cap C_3.
\]
We call $M_1$ and $M_3$ the two minimal half-shadows.

\begin{lemma}\label{lem:intersection}
Let $x$ and $y$ be distinct elements of $M$, and set
\[
        k=|B_x\cap B_y|.
\]
Then $k\in\{0,\,1,\,2\}$.  If $x$ and $y$ lie in the same shadow coset, then $k=1$; if they lie in different shadow cosets, then $k\in\{0,\,2\}$.
\end{lemma}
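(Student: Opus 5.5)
The plan is to use the coset arithmetic from the start of Section~\ref{sec:enumerator}, together with the weight information in Corollary~\ref{cor:forced}.

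For distinct $x,y\in M$ we have
\[
\wt(x+y)=\wt(x)+\wt(y)-2k=10-2k.
\]
Since $x\neq y$, the sum $x+y$ is a nonzero codeword of $C_0^\perp$.

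\emph{Same coset.} If $x$ and $y$ lie in the same shadow coset, then $x+y\in C_0$. By Corollary~\ref{cor:forced}, the nonzero weights of $C_0$ lie in $\{8,12,16,20,26\}$; this is also visible from the formula for $W_{C_0}$ in the proof of Lemma~\ref{lem:harmonic-design-consequences}. Since $0\le k\le 5$, the weight $10-2k$ lies in $\{0,2,4,6,8,10\}$. It must be nonzero and divisible by $4$, and $C_0$ has no words of weight $4$ because $d(C)=6$. Hence $10-2k=8$, so $k=1$.

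\emph{Different cosets.} If $x$ and $y$ lie in different shadow cosets, then $x+y\in C_2$. Words of $C_2$ have weight $\equiv 2\pmod 4$, and by Corollary~\ref{cor:forced} the possible weights are $6,10,14,18$. Intersecting with $\{2,6,10\}$ (the values of $10-2k\ge 0$ that are $\equiv 2 \pmod 4$) and discarding weight $2$, which is impossible since $d(C)=6$, leaves $10-2k\in\{6,10\}$. Hence $k\in\{0,2\}$.

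Combining the two cases gives $k\in\{0,1,2\}$. The argument is essentially immediate; the only point to check carefully is that $x+y$ lands in $C_0$ or $C_2$ according to whether $x$ and $y$ share a shadow coset, which is exactly the four-group structure of $C_0^\perp/C_0$ recorded at the start of the section.
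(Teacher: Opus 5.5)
Your proof is correct and is essentially the paper's argument. You use $\wt(x+y)=10-2k$, with $x+y\in C_0$ (weight divisible by $4$, hence at least $8$) in the same coset and $x+y\in C_2$ (weight $\equiv 2\pmod 4$, hence at least $6$) otherwise. There is one harmless slip: the weight-$26$ all-ones word lies in $C_2$, not $C_0$, so the weight lists you quote from Corollary~\ref{cor:forced} should be $\{8,12,16,20\}$ for $C_0$ and $\{6,10,14,18,26\}$ for $C_2$; this does not affect the argument.
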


\begin{proof}
One has $\wt(x+y)=10-2\,k$, and $x+y\in C$, so $k\le2$.  In the same shadow coset the sum lies in $C_0$, hence has weight divisible by $4$ and at least $8$, which gives $k=1$.  In different shadow cosets the sum lies in $C_2$, hence has weight $2\pmod4$, which gives $k\in\{0,\,2\}$.
\end{proof}


\section{A projective plane of order 3}\label{sec:projective-plane}

By Lemma~\ref{lem:intersection}, distinct blocks in the same half-shadow meet in one coordinate, whereas blocks from different half-shadows meet in zero or two coordinates.

\begin{proposition}\label{prop:shadow-pairs}
One has $|M_1|=|M_3|=13$.  Every block in either half-shadow meets exactly four blocks in the other half-shadow in two coordinates.  Moreover,
\[
        (x,z)\longmapsto x+z
\]
is a bijection from the $52$ such pairs $(x,z)\in M_1\times M_3$ to $\mathcal W$.
\end{proposition}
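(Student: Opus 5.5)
The plan is to count incidences twice, using the design property from Lemma~\ref{lem:harmonic-design-consequences} and the intersection pattern from Lemma~\ref{lem:intersection}. First I establish the bijection onto $\mathcal W$, and then I deduce $|M_1|=|M_3|=13$ from a quadratic equation. Put $m_1=|M_1|$ and $m_3=|M_3|$, so $m_1+m_3=26$. For a coordinate $i$, let $r_i$ and $s_i$ be the numbers of blocks of $M_1$ and $M_3$ containing $i$. By Lemma~\ref{lem:harmonic-design-consequences}, $r_i+s_i=5$ for every $i$.

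\emph{Step 1: the bijection.} Take $c\in\mathcal W$ and $x\in M$, and set $j=|B_c\cap B_x|$. By coset arithmetic, $c+x$ lies in the shadow; for $x\in C_1$, for instance, $c+x\in C_2+C_1=C_3$. By Corollary~\ref{cor:forced}, all shadow weights are $\equiv1\pmod4$. Since $\wt(c+x)=11-2j$, this gives $j\in\{1,3,5\}$. The value $j=5$ would give a shadow word of weight $1$, which is excluded. Hence $j\in\{1,3\}$.

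The design property gives $\sum_{x\in M}|B_c\cap B_x|=6\cdot5=30$. It follows that $26+2\#\{x: j=3\}=30$, so exactly two $x\in M$ meet $B_c$ in three coordinates. If $x$ is one of them, then $z=c+x\in M$ lies in the other shadow coset, and $|B_c\cap B_z|=6-3=3$. So the two words are $x\in M_1$ and $z\in M_3$, with $c=x+z$ and $|B_x\cap B_z|=2$. Conversely, every pair $(x,z)\in M_1\times M_3$ with $|B_x\cap B_z|=2$ gives $x+z\in C_2$ of weight $10-4=6$. Together with the uniqueness of the decomposition just shown, this makes $(x,z)\mapsto x+z$ a bijection onto $\mathcal W$. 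In particular the number of such pairs is $52$.

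\emph{Step 2: the sizes.} Fix $x\in M_1$. By Lemma~\ref{lem:intersection}, any two distinct blocks of $M_1$ meet in exactly one coordinate, so $\sum_{i\in B_x}r_i=5+(m_1-1)$. Therefore
\[
\sum_{z\in M_3}|B_x\cap B_z|=\sum_{i\in B_x}s_i=25-(m_1+4)=21-m_1 .
\]
Since each such intersection is $0$ or $2$, the block $x$ meets exactly $(21-m_1)/2$ blocks of $M_3$ in two coordinates. Summing over $M_1$ and using Step~1 gives $m_1(21-m_1)/2=52$, that is, $m_1^2-21m_1+104=0$, so $m_1\in\{8,13\}$. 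The symmetric argument gives $m_3\in\{8,13\}$. Together with $m_1+m_3=26$, this forces $m_1=m_3=13$. Each block then has $(21-13)/2=4$ partners in the other half-shadow.

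I do not expect a serious obstacle. The only delicate point is Step~1: one must see that all shadow weights are $\equiv1\pmod4$, which can be read off from Corollary~\ref{cor:forced}, so that $j$ is restricted to $\{1,3\}$. One must also check that the two words meeting $c$ in three coordinates necessarily lie in different half-shadows, which follows from the complementary-intersection computation above.
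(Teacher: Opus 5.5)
Your proof is correct. It uses the same two ingredients as the paper. The first is the $j\in\{1,3\}$ argument showing that each $c\in\mathcal W$ splits uniquely as $x+z$ with $x\in M_1$ and $z\in M_3$. The second is the per-block count of $(21-m_1)/2$ cross partners, obtained from the $1$-design property and Lemma~\ref{lem:intersection}.

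Where you differ from the paper is in how the sizes are pinned down. The paper never uses $|\mathcal W|=52$ for this. It computes the partner counts $(21-m)/2$ for blocks of $M_1$ and $(m-5)/2$ for blocks of $M_3$, and equates the two totals:
\[
m(21-m)=(26-m)(m-5).
\]
This equation is linear in $m$ and gives $m=13$ at once. The number $13\cdot4=52$ of cross pairs then falls out, and the $j$-argument is used only afterwards, to identify these pairs with $\mathcal W$.

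You instead prove the bijection first and import $A_6=52$ from Corollary~\ref{cor:forced}. This gives a quadratic with the extra root $m_1=8$, which you then remove using the symmetric count for $M_3$. But once you have both one-sided counts, setting $m_1(21-m_1)=m_3(21-m_3)$ with $m_3=26-m_1$ already forces $m_1=13$. So the detour through $52$ is not needed for the sizes.

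What your ordering buys is that both half-shadows are visibly nonempty from Step~1, since every $c\in\mathcal W$ produces one word in each. The paper instead gets nonemptiness of $M_3$ from $m\le 21$. The paper's route is more economical: it separates the purely combinatorial determination of $|M_1|=|M_3|=13$ from the weight-$6$ input.
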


\begin{proof}
At least one half-shadow is nonempty; interchange the labels if necessary so that $M_1\ne\varnothing$.  Put $m=|M_1|$, so that $|M_3|=26-m$.  Fix $x\in M_1$, and let $d_1$ be the number of blocks in $M_3$ that meet $B_x$ in two coordinates.  Since the blocks from $M$ form a $1$-design with replication number $5$, Lemma~\ref{lem:intersection} gives
\[
        25=\sum_{u\in M}|B_x\cap B_u|
          =5+(m-1)+2\,d_1.
\]
Thus $d_1=(21-m)/2$, independently of $x$.  In particular, $m\le21$, so $M_3$ is also nonempty.  Similarly, every block in $M_3$ has
\[
        d_3=\frac{m-5}{2}
\]
partners in $M_1$ that meet it in two coordinates.  Counting these pairs from the two half-shadows gives
\[
        m(21-m)=(26-m)(m-5).
\]
Hence $m=13$, and then $d_1=d_3=4$.  There are therefore $13\cdot 4=52$ pairs $(x,z)$ with $|B_x\cap B_z|=2$, and every such pair has $x+z\in\mathcal W$.

Conversely, let $c\in\mathcal W$ and put
\[
        j(u)=|B_u\cap B_c|\qquad(u\in M).
\]
Since $c\in C_2$, the word $u+c$ lies in the shadow.  Its weight is $11-2\,j(u)$, and the shadow has no words of weights $1$, $3$, $7$, or $11$.  Hence $j(u)\in\{1,\,3\}$.  Moreover,
\[
        \sum_{u\in M}j(u)=6\cdot 5=30,
\]
because each coordinate of $B_c$ lies in five blocks from $M$.  It follows that exactly two blocks have $j(u)=3$.  Addition by $c\in C_2$ interchanges these two weight-$5$ words and also interchanges $C_1$ and $C_3$.  They therefore form the unique pair $(x,z)\in M_1\times M_3$ satisfying $c=x+z$.
\end{proof}

For a coordinate $i$, set
\[
        r(i)=|\{x\in M_1:i\in B_x\}|.
\]

\begin{proposition}\label{prop:point-line-stars}
For every coordinate $i$, one has $r(i)\in\{1,\,4\}$, and each value occurs $13$ times.  Put
\[
        P=\{i:r(i)=1\},
        \qquad
        L=\{i:r(i)=4\}.
\]
For each $p\in P$, let $x_p$ denote the unique word in $M_1$ whose support contains $p$.  For each $\ell\in L$, let $z_\ell$ denote the unique word in $M_3$ whose support contains $\ell$.  The maps
\[
        p\longmapsto x_p,
        \qquad
        \ell\longmapsto z_\ell
\]
are bijections from $P$ to $M_1$ and from $L$ to $M_3$, respectively.

Declare $p\in P$ to be incident with $\ell\in L$, and write $p\in\ell$, when
\[
        \ell\in B_{x_p}.
\]
Then
\begin{equation}\label{eq:stars}
\begin{aligned}
        \St(p):=B_{x_p}
        &=\{p\}\cup\{\ell\in L:p\in\ell\},\\
        \St(\ell):=B_{z_\ell}
        &=\{\ell\}\cup\{p\in P:p\in\ell\}.
\end{aligned}
\end{equation}
Moreover,
\begin{equation}\label{eq:star-intersection}
        \St(p)\cap\St(\ell)=
        \begin{cases}
        \{p,\ell\},& p\in\ell,\\
        \varnothing,& p\notin\ell.
        \end{cases}
\end{equation}
\end{proposition}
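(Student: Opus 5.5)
We work with the two replication functions $r(i)$ and $s(i)=|\{z\in M_3:i\in B_z\}|$. By Lemma~\ref{lem:harmonic-design-consequences} they satisfy $r(i)+s(i)=5$ for every coordinate $i$. By Proposition~\ref{prop:shadow-pairs}, $|M_1|=13$, and by Lemma~\ref{lem:intersection} the $13$ blocks of $M_1$ pairwise meet in exactly one coordinate.

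\textbf{Step 1: the values $1$ and $4$ of $r(i)$.} Count pairs of distinct blocks of $M_1$ through a common coordinate. Summing over coordinates gives
\[
        \sum_i \binom{r(i)}{2}=\binom{13}{2}=78,
        \qquad
        \sum_i r(i)=13\cdot 5=65 .
\]
Hence $\sum_i r(i)^2=156+65=221$. The same computation for $M_3$ gives $\sum_i s(i)^2=221$ and $\sum_i s(i)=65$. Substituting $s=5-r$ yields no new information, so we need an extra constraint.

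Fix $x\in M_1$. Since the other $12$ blocks of $M_1$ each meet $B_x$ exactly once,
\[
        \sum_{i\in B_x}(r(i)-1)=12 .
\]
For the $M_3$ side, Proposition~\ref{prop:shadow-pairs} shows that $x$ meets $4$ blocks of $M_3$ in two points and the remaining $9$ in none. Hence
\[
        \sum_{i\in B_x}s(i)=8,
        \qquad\text{equivalently}\qquad
        \sum_{i\in B_x}r(i)=25-8=17 .
\]
This agrees with the previous identity, so it is again only a consistency check. The decisive constraint comes from the vanishing intersections. Take $i\in B_x$ with $s(i)\ge1$ and some $z\ni i$ in $M_3$; then $|B_x\cap B_z|=2$. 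Each of the $4$ partners $z$ of $x$ contributes exactly two coordinates of $B_x$. So $\sum_{i\in B_x}s(i)=8$ counts incidences of $B_x$ with the blocks of $M_3$, and the question is how these spread over the $5$ coordinates of $B_x$.

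I would now show that within $B_x$ exactly one coordinate has $s=4$ and four have $s=1$. Let $z,z'$ be two partners of $x$ sharing a coordinate $i\in B_x$. They lie in $M_3$, so $|B_z\cap B_{z'}|=1$, and therefore their other points in $B_x$ are distinct. Thus the $4$ partners form a linear space on $B_x$ whose lines are the pairs $B_x\cap B_z$: four $2$-subsets of a $5$-set, pairwise meeting in at most one point. Combining this with $\sum_{i\in B_x} r(i)=17$, $r(i)\ge1$, and $r(i)=5-s(i)$, we must decide among $s$-profiles on $B_x$ summing to $8$. The natural guess is the profile $(4,1,1,1,1)$, a star of four pairs through one point. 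This is the step I expect to be the main obstacle. A $4$-cycle or a path-type configuration also fits. To exclude these, first apply the global equation $\sum_i r(i)^2=221$ with $\sum_i r(i)=65$ over $26$ coordinates: the profile $r\in\{1,4\}$ with $13$ of each gives exactly $13+208=221$ and $13+52=65$. I would then try to show that $r(i)\in\{2,3\}$ is impossible. Consider three blocks of $M_1$ through $i$ together with a block $z\in M_3$ through $i$. Each of the three blocks meets $B_z$ in $0$ or $2$ points and contains $i$, so each meets $B_z$ in $\{i,\text{one more point}\}$. Parity and pairwise intersections then force the remaining points of $B_z$ to be covered consistently, and the extreme case $s(i)=5-r(i)$ should yield a counting contradiction. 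The fallback is to combine $\sum_i (r(i)-1)(r(i)-4)=221-5\cdot65+4\cdot26=0$ with the pointwise inequality $(r-1)(r-4)\le 0$ for $1\le r\le4$. Once $r(i)=0$ and $r(i)=5$ are excluded, this forces $r\in\{1,4\}$. If $r(i)=0$, then all $5$ blocks of $M_3$ through $i$ pairwise meet only at $i$ and cover $21$ points, and counting the $M_1$ blocks, which meet each of them in $0$ or $2$ points, should give a contradiction; $r(i)=5$ is symmetric.

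\textbf{Step 2: the bijections and the star descriptions.} Given $r\in\{1,4\}$, the points of $P$ lie in exactly one block of $M_1$. Since $\sum_{i\in B_x}(r(i)-1)=12$ forces each $B_x$ to contain exactly one coordinate with $r=1$, the map $p\mapsto x_p$ is a bijection. Symmetrically, $s=5-r$ gives the bijection $\ell\mapsto z_\ell$. This yields $\St(p)=\{p\}\cup(B_{x_p}\cap L)$, which is the definition of incidence. For $\St(\ell)$, we show that $p\in B_{z_\ell}$ if and only if $\ell\in B_{x_p}$. If $\ell\in B_{x_p}$, then $x_p$ and $z_\ell$ share $\ell$, hence meet in exactly two points. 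The second point must be $p$: the only other candidates are points of $L\cap B_{x_p}$, which are not in $B_{z_\ell}$ because $s=1$ on $L$ and that single $M_3$ block is $z_{\ell'}\ne z_\ell$. The converse holds by the same argument with the roles exchanged. Finally, \eqref{eq:star-intersection} follows from the $0$-or-$2$ rule of Lemma~\ref{lem:intersection}.
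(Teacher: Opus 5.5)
Your Step~2 is fine once $r(i)\in\{1,4\}$ is known, and the identity $\sum_i(r(i)-1)(r(i)-4)=0$ is correct. The gap is that Step~1 never proves $r(i)\in\{1,4\}$. The sign argument works only after $r(i)\in\{0,5\}$ has been ruled out, because $(r-1)(r-4)=4$ at $r=0,5$ can cancel the value $-2$ at $r=2,3$. For that exclusion you only say that a count ``should give a contradiction''. The profile analysis on $B_x$ and the direct attack on $r\in\{2,3\}$ are likewise left open. This is the crux, not a detail. Since $(r-1)(r-4)=4-r(5-r)$, your identity is just $\sum_i r(i)\bigl(5-r(i)\bigr)=104=26\cdot 4$. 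That is the average over coordinates of the pointwise statement you need, and an averaged identity cannot give it without further input.

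The input the paper uses is the second half of Lemma~\ref{lem:harmonic-design-consequences}, which you never invoke: the weight-$6$ words form a $1$-design with replication number $12$. By the bijection $(x,z)\mapsto x+z$ of Proposition~\ref{prop:shadow-pairs}, the weight-$6$ words containing $i$ are exactly the partner pairs with $i$ in exactly one of $B_x,B_z$. Now count the $5\cdot 4=20$ incidences (block through $i$, partner). The $r(i)(5-r(i))$ pairs with $i$ in both blocks are counted twice, so $20=12+2r(i)(5-r(i))$, hence $r(i)(5-r(i))=4$ for every $i$ directly; for $r(i)=0$ this reads $20=12$.

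Your $r(i)=0$ idea can also be completed without the design, but it needs more than you wrote. First, the other eight $M_3$-blocks lie inside the $21$-point union of the five through $i$, so the remaining five coordinates $T$ have $r=5$. Next, every $M_1$-block avoids $i$ and so, by the $0$-or-$2$ rule, meets $T$ in an odd number $b_x$ of points. By the same covering argument, each pair of $T$ lies in exactly one $M_1$-block. Hence $\sum_x(b_x-1)=12$ and $\sum_x\binom{b_x}{2}=10$. This contradicts $\binom{b}{2}\ge\tfrac32(b-1)$ for odd $b$, which would force $\sum_x\binom{b_x}{2}\ge 18$.
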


\begin{proof}
Since the blocks in $M=M_1\cup M_3$ form a $1$-design with replication number $5$, the multiplicity of a coordinate $i$ in $M_3$ is $5-r(i)$.

Fix a coordinate $i$.  There are five blocks of $M$ containing
$i$, and Proposition~\ref{prop:shadow-pairs} gives each of them
four partners in the opposite half-shadow whose supports meet it
in two coordinates.  Counting incidences between these pairs
and their members that contain $i$ therefore gives $20$.

A pair $(x,z)$ in which $i$ belongs to exactly one of $B_x$ and
$B_z$ corresponds, under the bijection
\[
        (x,z)\longmapsto x+z,
\]
to a word in $\mathcal W$ whose support contains $i$.  Since the
words in $\mathcal W$ form a $1$-design with replication number
$12$, there are exactly $12$ such pairs.

There are $r(i)$ blocks of $M_1$ containing $i$ and $5-r(i)$
blocks of $M_3$ containing $i$.  Every pair formed from such two
blocks has nonempty intersection and hence, by
Lemma~\ref{lem:intersection}, has intersection of size $2$.
Thus there are $r(i)(5-r(i))$ pairs in which $i$ belongs to both
supports, and each contributes twice to the incidence count.
Consequently,
\[
        20=12+2\,r(i)\bigl(5-r(i)\bigr).
\]
Hence
\[
        r(i)\bigl(5-r(i)\bigr)=4,
\]
and therefore
\[
        r(i)\in\{1,\,4\}.
\]

Since every block of $M_1$ has size $5$,
\[
        65=|M_1|\cdot5=\sum_i r(i)=|P|+4|L|.
\]
Together with $|P|+|L|=26$, this gives
\[
        |P|=|L|=13.
\]

Fix $x\in M_1$.  Each of the other $12$ blocks of $M_1$ meets $B_x$ in one coordinate.  Therefore
\[
        12
        =\sum_{x'\in M_1\setminus\{x\}}|B_x\cap B_{x'}|
        =\sum_{i\in B_x}\bigl(r(i)-1\bigr).
\]
Each summand is either $0$ or $3$.  Hence $B_x$ contains exactly one coordinate of $P$ and four coordinates of $L$.

The same argument with $M_1$ and $M_3$ interchanged
(thus replacing $r(i)$ by $5-r(i)$) shows that every $B_z$,
$z\in M_3$, contains four coordinates of $P$ and exactly one
coordinate of $L$.

Now every $p\in P$ lies in a unique block of $M_1$, since $r(p)=1$, and every block of $M_1$ contains a unique coordinate of $P$.  Thus $x_p$ is well defined and $p\mapsto x_p$ is a bijection from $P$ to $M_1$.  Likewise, every $\ell\in L$ lies in a unique block of $M_3$, since $5-r(\ell)=1$, and $\ell\mapsto z_\ell$ is a bijection from $L$ to $M_3$.

For $p\in P$ and $\ell\in L$, the support $B_{x_p}$ has no coordinate of $P$ other than $p$, while $B_{z_\ell}$ has no coordinate of $L$ other than $\ell$.  Therefore
\[
        B_{x_p}\cap B_{z_\ell}\subseteq\{p,\ell\}.
\]
By Lemma~\ref{lem:intersection}, this intersection has size $0$ or $2$.  It follows that $\ell$ belongs to $B_{x_p}$ precisely when $p$ belongs to $B_{z_\ell}$.  The descriptions in \eqref{eq:stars} and the intersection formula \eqref{eq:star-intersection} now follow.
\end{proof}

\begin{proposition}\label{prop:PG23}
The incidence structure on $P\sqcup L$ is a projective plane of order $3$.
\end{proposition}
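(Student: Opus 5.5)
We verify the axioms of a projective plane of order~$3$ for the incidence structure $(P,L,\in)$, using only the intersection data from Proposition~\ref{prop:point-line-stars} and Lemma~\ref{lem:intersection}.

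\emph{Counting.} By \eqref{eq:stars}, $|\St(p)|=5$ means each point lies on exactly four lines. Likewise each line contains exactly four points. There are $13$ points and $13$ lines, which matches the parameters $q^2+q+1=13$ and $q+1=4$ for $q=3$.

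\emph{Two lines meet in exactly one point.} Take distinct $\ell,\ell'\in L$. The words $z_\ell$ and $z_{\ell'}$ lie in the same half-shadow $M_3$, so Lemma~\ref{lem:intersection} gives $|B_{z_\ell}\cap B_{z_{\ell'}}|=1$. By \eqref{eq:stars}, $\St(\ell)\cap\St(\ell')$ contains no coordinate of $L$, since $\ell\ne\ell'$ and each star contains only its own line coordinate. Hence the unique common coordinate is a point $p\in P$ with $p\in\ell$ and $p\in\ell'$. So any two distinct lines meet in exactly one point.

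\emph{Two points lie on exactly one line.} The same argument applied to $x_p,x_{p'}\in M_1$ shows that the unique common coordinate of $\St(p)$ and $\St(p')$ lies in $L$. So any two distinct points are joined by exactly one line.

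\emph{Non-degeneracy.} It remains to exhibit four points, no three collinear. One option is to use the standard fact that a finite incidence structure with two points on a unique line, two lines meeting in a unique point, and four points per line is a projective plane of order~$3$. To make this self-contained, proceed directly. Pick a line $\ell$. There are points $a,b$ on $\ell$ and, since only $4$ of the $13$ points lie on $\ell$, points $c,d$ off $\ell$. Each line through $c$ meets $\ell$ in one point, so exactly one line through $c$ meets $\ell$ in $a$, and so on. A short count of the $9$ points off $\ell$ then lets us choose $c,d$ with $cd$ not passing through $a$ or $b$. The resulting quadrangle $a,b,c,d$ has no three points collinear.

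This is the only step that needs any care, and it is routine. The real content is the observation that the size-one intersections inside a half-shadow, together with the separation into $P$ and $L$ coordinates in \eqref{eq:stars}, force the common coordinate to be of the opposite type.
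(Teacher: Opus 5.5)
Your proof is correct and is essentially the paper's argument: distinct stars in the same half-shadow meet in exactly one coordinate, and the star descriptions force that coordinate to be of the opposite type, which gives a symmetric $2$-$(13,4,1)$ design. The paper then cites the standard fact that such a design is a projective plane of order~$3$, whereas you verify the quadrangle axiom directly; your count works, leaving four choices of $d$ for each $c$. One small point: the ``standard fact'' in your aside also needs the hypothesis of $13$ points, since a single line of four points satisfies the other conditions vacuously.
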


\begin{proof}
Each point lies on four lines and each line contains four points by \eqref{eq:stars}.  If $p\ne q$, then $\St(p)$ and $\St(q)$ meet in exactly one coordinate.  It cannot be a point-coordinate, so it is the unique line through $p$ and $q$.  Dually, two distinct lines meet in a unique point.  Thus the incidence structure is a symmetric $2$-$(13,4,1)$ design, hence a projective plane of order $3$.
\end{proof}

An \emph{intrinsic flag} is an incident pair $(p,\ell)$, identified with the two-coordinate set
\[
        F(p,\ell)=\{p,\ell\}=\St(p)\cap\St(\ell).
\]

\begin{proposition}\label{prop:intrinsic-flags}
There are $52$ intrinsic flags, and each coordinate lies in four of them.  The map
\[
        (p,\ell)\longmapsto x_p+z_\ell
\]
is a bijection from the flags to $\mathcal W$.  No word in $\mathcal W$ contains both coordinates of a flag.
\end{proposition}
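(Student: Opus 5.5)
The plan is to read off everything from Propositions~\ref{prop:shadow-pairs} and~\ref{prop:point-line-stars}. First, I count flags: by Proposition~\ref{prop:PG23} each of the $13$ points lies on four lines, which gives $13\cdot4=52$ incident pairs. A point coordinate $p$ lies in the four flags $(p,\ell)$ with $\ell\ni p$, and a line coordinate $\ell$ lies in the four flags $(p,\ell)$ with $p\in\ell$. So each coordinate lies in exactly four flags.

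Next comes the bijection. By \eqref{eq:star-intersection}, for $p\in P$ and $\ell\in L$ we have $|B_{x_p}\cap B_{z_\ell}|=2$ exactly when $p\in\ell$, and $0$ otherwise. Since $p\mapsto x_p$ and $\ell\mapsto z_\ell$ are bijections onto $M_1$ and $M_3$ (Proposition~\ref{prop:point-line-stars}), the flags correspond bijectively to the pairs $(x,z)\in M_1\times M_3$ meeting in two coordinates. Proposition~\ref{prop:shadow-pairs} states that $(x,z)\mapsto x+z$ is a bijection from exactly these $52$ pairs onto $\mathcal W$. Composing the two bijections gives the claimed bijection $(p,\ell)\mapsto x_p+z_\ell$.

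The last claim is the only step needing a genuinely new (though short) argument. Let $c\in\mathcal W$ and let $(p,\ell)$ be any flag; we must show $\{p,\ell\}\not\subseteq B_c$. I would use the intersection parities from the proof of Proposition~\ref{prop:shadow-pairs}: for every $u\in M$, $|B_u\cap B_c|\in\{1,3\}$. Suppose $p,\ell\in B_c$. Both $p$ and $\ell$ lie in $\St(p)=B_{x_p}$ and in $\St(\ell)=B_{z_\ell}$.

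First, write $c=x_{p'}+z_{\ell'}$ for the flag $(p',\ell')$ attached to $c$. Then
\[
B_c=(\St(p')\cup\St(\ell'))\setminus\{p',\ell'\}=\bigl(\{\ell\in L: p'\in\ell\}\setminus\{\ell'\}\bigr)\cup\bigl(\{q\in P:q\in\ell'\}\setminus\{p'\}\bigr).
\]
This set consists of three lines through $p'$ other than $\ell'$, together with three points on $\ell'$ other than $p'$. So we need a point $q\in\ell'$ with $q\ne p'$ and a line $m\ni p'$ with $m\ne\ell'$ such that $q\in m$.

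Since $p'\in m$ and $p'\in\ell'$, the lines $m$ and $\ell'$ are distinct lines through $p'$ and therefore meet only in $p'$. The point $q$ lies on both $m$ and $\ell'$, so $q=p'$, contradicting $q\ne p'$. Hence $B_c$ contains no flag. The only care needed is the explicit description of $B_c$, which follows from \eqref{eq:stars} and \eqref{eq:star-intersection}.
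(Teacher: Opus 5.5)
Your proposal is correct and follows the paper's proof essentially step for step. The flag count and the bijection are read off from Propositions~\ref{prop:shadow-pairs} and~\ref{prop:point-line-stars}, and the last claim uses the same explicit description of $B_c$ (three points of $\ell'\setminus\{p'\}$ and three lines through $p'$ other than $\ell'$) followed by the observation that two distinct lines through $p'$ cannot share a second point; your opening remark about $|B_u\cap B_c|\in\{1,3\}$ is never used and can be dropped.
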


\begin{proof}
By Proposition~\ref{prop:point-line-stars}, the intrinsic flags are exactly the pairs of stars whose supports meet in two coordinates.  Proposition~\ref{prop:shadow-pairs} therefore shows that there are $52$ flags and that the displayed map is a bijection onto $\mathcal W$.  Each point is incident with four lines and each line contains four points, so every coordinate lies in four flags.

For the last assertion, write $c=x_q+z_\lambda$, where
$q\in\lambda$, using the displayed bijection.  By
\eqref{eq:stars} and \eqref{eq:star-intersection}, the
point-coordinates of $c$ are the three points of
$\lambda\setminus\{q\}$, while its line-coordinates are the
three lines through $q$ other than $\lambda$.  If $c$ contained
both coordinates of a flag $(p,\ell)$, then
$p\in\lambda$, $p\ne q$, $q\in\ell$, and
$\ell\ne\lambda$.  Since also $p\in\ell$, the distinct points
$p$, $q$ would lie on the two distinct lines $\lambda$, $\ell$, a
contradiction.
\end{proof}

\begin{remark}\rm
Some of the combinatorial counts in this section can alternatively be derived from vector-valued MacWilliams identities and degree-two harmonic weight enumerators.  For the present proof, the direct counting arguments keep the geometry visible and avoid additional notation.
\end{remark}


\section{The plane code}\label{sec:plane-proof}

We recall Glynn's construction from a projective plane of odd order \cite{Glynn1991}.  Let $\Pi$ be a projective plane of order $3$, with point set $P$, line set $L$, incidence matrix $N$, and all-ones matrix $J$.  Over $\F$, put
\begin{equation}\label{eq:plane-code}
        A=J+N,
        \qquad
        C(\Pi)=\{(u,uA):u\in\F^P\}\leq \F^P\oplus\F^L.
\end{equation}
Thus $A$ is the point-line non-incidence matrix.

\begin{proposition}\label{prop:plane-code}
For every projective plane $\Pi$ of order $3$, the code $C(\Pi)$ is a Type~I self-dual code with parameters $[26,13,6]$.
\end{proposition}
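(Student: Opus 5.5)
We need to show $C(\Pi)$ is self-dual with minimum weight $6$ and contains a word of weight $\equiv 2 \pmod 4$.

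\textbf{Self-duality.} The code has dimension $13$, since the generator matrix $[I\mid A]$ is systematic. Self-orthogonality is equivalent to $I+AA^T=0$ over $\F$, that is, $AA^T=I$. Over the integers,
\[
NN^T=3I+J,\qquad NJ=JN^T=4J,\qquad J^2=13J .
\]
Hence
\[
AA^T=J^2+JN^T+NJ+NN^T=13J+4J+4J+3I+J=3I+22J\equiv I \pmod 2 .
\]
So $C(\Pi)$ is self-orthogonal of dimension $13$, and therefore self-dual.

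\textbf{Type~I.} A single row $(e_p,e_pA)$ has weight $1+9=10\equiv2\pmod 4$, because $p$ is non-incident with $13-4=9$ lines. So the code is Type~I.

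\textbf{Minimum weight.} Note that $A^T$ is the line-point non-incidence matrix and $AA^T=I$, so $A^{-1}=A^T$. The code is therefore also $\{(vA^T,v)\}$, and the roles of points and lines are symmetric, using the dual plane. For $u\in\F^P$ with $|u|=s$, the weight of $(u,uA)$ is $s+\#\{\ell : |\ell\setminus u|\ \text{odd}\}$. Since $J$ contributes $s\bmod 2$ to every line, the number of lines that count is
\[
\#\{\ell: |\ell\cap u|\equiv s+1 \pmod 2\}.
\]

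To show the weight is at least $6$, it suffices by the point/line symmetry to handle codewords with $s\le 3$ nonzero point-coordinates. (If $s\ge 4$ and the line part has weight at most $1$, apply the symmetric argument with $v=uA$ of weight at most $1$.) Since the weight is even, we need only exclude weights $2$ and $4$.

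The case analysis is as follows.
\begin{itemize}
\item[] $s=1$: the weight is $10$.
\item[] $s=2$: we count lines meeting $u$ in an odd number of points, namely the lines through exactly one of the two points. There are $3+3=6$ of them, so the weight is $8$.
\item[] $s=3$, collinear: we count lines meeting $u$ evenly (in $0$ or $2$ points). Two of the points never lie on a second common line, so only lines missing all three count. Each point lies on $4$ lines, one of which is the common line, so the number of lines meeting $u$ is $1+3\cdot3=10$. That leaves $3$ lines, and the weight is $6$.
\item[] $s=3$, a triangle: there are $3$ lines meeting $u$ in exactly $2$ points. Each point lies on $2$ other lines, giving $6$ lines meeting $u$ in $1$ point, so $9$ lines meet $u$. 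That leaves $4$ lines meeting $u$ in $0$ points. The even lines number $3+4=7$, so the weight is $10$.
\end{itemize}
The case $s=0$ gives the zero word.

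The remaining gap is codewords whose point part and line part both have weight at least $4$. These automatically have total weight at least $8$. Hence every nonzero word has weight at least $6$, and the collinear triples show that weight $6$ is attained.

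The main obstacle is organizing the minimum-weight bound cleanly. The key step is the observation that $A^{-1}=A^T$, which lets us assume without loss of generality that the smaller of the two halves has weight at most $3$. After that, only the small case check above is required.
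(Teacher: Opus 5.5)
Your proof is correct and follows essentially the paper's route: $AA^T=I$ from the incidence identities, point/line symmetry via $A^{-1}=A^T$, a short line count for small point supports, and collinear triples for weight~$6$. The differences are minor: the paper reduces further to $s\le 2$ using evenness and $s,t>0$, so it never needs your triangle case, and it deduces Type~I from the weight-$6$ word rather than from a weight-$10$ row. One small slip: the $\ell$-coordinate of $uA$ is the parity of $|u\setminus\ell|$, not $|\ell\setminus u|$, although the criterion $|\ell\cap u|\equiv s+1\pmod 2$ that you actually use is correct.
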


\begin{proof}
Over $\F$ one has
\[
        NN^T=I+J,
        \qquad NJ=JN^T=0,
        \qquad J^2=J,
\]
and hence $AA^T=I$.  Thus the rows of $[I\mid A]$ are mutually orthogonal.  Their first components are independent, so they span a self-dual code of dimension $13$.

Let $c=(u,uA)\ne0$, set $U=\supp(u)$, $s=|U|$, and
$t=\wt(uA)$.  Since $A$ is the non-incidence matrix, the
$\ell$-coordinate of $uA$ is the parity of the number of points
of $U$ not on $\ell$.  Moreover, if $v=uA$, then
\[
        u=vA^T
\]
because $AA^T=I$.  Thus, after swapping the two coordinate
blocks, the word becomes $(v,vA^T)$ for the dual plane, so the
roles of $s$ and $t$ may be interchanged.

All codewords are even.  If $\wt(c)<6$, then
$s+t\in\{2,4\}$.  Since $A$ is invertible, $s,t>0$; after
interchanging points and lines if necessary, we may therefore
assume $s\le2$.  If $s=1$, then $t=13-4=9$.  If $s=2$, the
contributing lines are precisely those incident with exactly one
of the two points, so
\[
        t=2(4-1)=6.
\]
Both cases contradict $s+t<6$.  Hence $d(C(\Pi))\ge6$.

Finally, let $U$ consist of three points on a line, and let $q$
be the fourth point of that line.  The contributing lines are
precisely the three lines through $q$ other than the given line.
Thus $s=t=3$, so $d(C(\Pi))=6$.  Since the code contains a word
of weight $6$, it is Type~I.
\end{proof}

Every projective plane $\Pi$ of order $3$ is Desarguesian and therefore isomorphic to $\PG(2,3)$; see~\cite[3.2.15]{Dembowski1968}.  Thus the plane of order $3$ is unique up to isomorphism, and we write
\[
        C^*=C(\PG(2,3)).
\]
A Singer labeling $P=L=\mathbf Z/13\mathbf Z$, with incidence difference set $\{0,\,1,\,3,\,9\}$, makes $A$ circulant with first row
\[
        0010111110111.
\]
This gives the circulant generator matrix recorded by Buyuklieva \cite{Buyuklieva1999}.

\begin{theorem}\label{thm:unique-plane}
Every Type~I self-dual binary code of length $26$ and minimum weight $6$ is equivalent to $C^*$.
\end{theorem}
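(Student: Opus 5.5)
Starting from an arbitrary Type~I self-dual $[26,13,6]$ code $C$, the plan is to show $C=C(\Pi)$ for the plane $\Pi$ recovered in Proposition~\ref{prop:PG23}, after ordering the coordinates as $P\sqcup L$. Since both codes are self-dual of dimension $13$, it suffices to exhibit $13$ linearly independent words of $C$ that are exactly the rows of $[I\mid A]$, i.e.\ for each point $p$ the word supported on $\{p\}\cup\{\ell\in L:p\notin\ell\}$ (size $1+9=10$). Equivalently we may show $C(\Pi)\subseteq C$ and compare dimensions.

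To produce these rows, we build them from the shadow words. For $p\in P$, consider $x_p\in M_1$, which by \eqref{eq:stars} has support $\{p\}\cup\{\ell:p\in\ell\}$. Sums of two words of $M_1$ lie in $C_0\subseteq C$, and sums of a word of $M_1$ with one of $M_3$ lie in $C_2\subseteq C$. The natural candidate for the row is
\[
 c_p=x_p+\sum_{\ell\in L} z_\ell ,
\]
a sum of $14$ shadow words, with an even number from $C_1\cup C_3$ combined. Since $14$ is even and each $x_p,z_\ell\in C_0^\perp$ lies in a shadow coset, the sum has coset class $C_1+13\cdot C_3=C_1+C_3$, which lies in $C_2\subset C$. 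Its support is computed from \eqref{eq:stars}. Each point $q$ lies in exactly four of the $\St(\ell)$, an even count, so the $z$-sum vanishes on points. Each line $\ell$ occurs once, so the $z$-sum is $1$ on every line. Adding $x_p$ then gives $1$ at $p$, and $1+1=0$ on the lines through $p$. Hence $c_p$ has support $\{p\}\cup\{\ell:p\notin\ell\}$, which is precisely the $p$-row of $[I\mid A]$ with $A=J+N$. The coset computation needs a little care: $C_0^\perp/C_0$ is a four-group, so the class of the sum is the sum of the classes, namely $C_1+13\,C_3=C_1+C_3=C_2$, and therefore $c_p\in C$.

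The rows $c_p$ are independent because their $P$-parts form the identity. They therefore span a $13$-dimensional subcode of $C$, so $C=C(\Pi)$ in the coordinate order $P\sqcup L$. Finally, Proposition~\ref{prop:PG23} together with the uniqueness of the plane of order $3$ gives $\Pi\cong\PG(2,3)$. A plane isomorphism induces a coordinate permutation carrying $C(\Pi)$ to $C^*$, so $C$ is equivalent to $C^*$.

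The main obstacle is checking the coset arithmetic and parities precisely: that the sum of shadow vectors lands in $C$ rather than in $S$, and that the point multiplicity $4$ is even. Both follow from the earlier structure, so little real difficulty remains, since the combinatorics was done in Section~\ref{sec:projective-plane}.
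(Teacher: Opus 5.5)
Your proof is correct and is essentially the paper's argument. Both proofs use the star descriptions from Proposition~\ref{prop:point-line-stars} to exhibit enough words of $C$ to force $C=C(\Pi)$ by a dimension count, and then invoke the uniqueness of the plane of order~$3$. The only difference is the choice of spanning words. The paper obtains $C_0=\{(u,uN):\wt(u)\ \text{even}\}$ from the sums $x_p+x_q$ and then adjoins $\1\in C$. You instead write each row of $[I\mid A]$ directly as $x_p+\sum_{\ell}z_\ell$. Since $\sum_q x_q+\sum_\ell z_\ell=\1$, this word equals $\1+\sum_{q\ne p}x_q$, which is exactly an element of the paper's coset $C_0+\1$.
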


\begin{proof}
Let $C$ be such a code.  By Proposition~\ref{prop:PG23}, its coordinates are the points and lines of a plane $\Pi\cong\PG(2,3)$.  Let $N$ be its incidence matrix.  The characteristic vector of the point-star $\St(p)$ is
\[
        x_p=(e_p,e_pN).
\]
The vectors $e_p+e_q$ span the even-weight subspace of $\F^P$, and the map $u\mapsto(u,uN)$ is injective.  Hence the vectors $x_p+x_q\in C_0$ span the $12$-dimensional space
\begin{equation}\label{eq:C0-incidence-form}
        C_0=\{(u,uN):u\in\F^P,\ \wt(u)\equiv0\pmod2\}.
\end{equation}
Here equality follows because $C_0$ also has dimension $12$.

Since every word of $C$ is even, the all-ones vector $\1=(\1_P,\1_L)$ belongs to $C^\perp=C$.  Its weight is $26\equiv2\pmod4$, so it lies outside $C_0$.  Moreover, $\1_P N=0$ because every line contains four points, while $uJ$ is $0$ or $\1_L$ according as $\wt(u)$ is even or odd.  If $u$ has odd weight, write $u=v+\1_P$ with $v$ even.  Equation~\eqref{eq:C0-incidence-form} then gives
\[
        C=C_0\sqcup(C_0+\1)
         =\{(u,u(J+N)):u\in\F^P\}
         =C(\Pi).
\]
Thus $C$ is equivalent to $C^*$.
\end{proof}

\begin{proposition}\label{prop:aut-plane}
The automorphism group of $C^*$ is
\[
        \Aut(C^*)\cong\PGL(3,3)\rtimes C_2,
        \qquad
        |\Aut(C^*)|=11232.
\]
\end{proposition}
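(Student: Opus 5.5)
The plan is to realize $\Aut(C^*)$ as the automorphism group of the projective plane $\PG(2,3)$ together with its polarities, acting on $P\sqcup L$. The group $\Aut(\PG(2,3))$ of collineations is $\PGL(3,3)$, of order $5616$, because $\mathrm{P\Gamma L}(3,3)=\PGL(3,3)$. Adjoining a polarity (for instance the standard one exchanging a point with its orthogonal line) gives the correlation group $\PGL(3,3)\rtimes C_2$, of order $11232$. Both inclusions will be shown.

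\textbf{First inclusion: correlations are automorphisms.} A collineation $g$, acting simultaneously on $P$ and $L$, satisfies $P_g^T N P_g' = N$ for the corresponding permutation matrices, and it fixes $J$. Hence it preserves $A=J+N$ and maps each $(u,uA)$ to $(ug,(ug)A)$. For a polarity $\pi$, identify $P$ and $L$ via $\pi$; then $N$ becomes symmetric. By the proof of Proposition~\ref{prop:plane-code} the swapped word $(uA,u)$ equals $(v,vA^T)=(v,vA)$, so it lies in $C^*$. Thus the swap composed with the identification is an automorphism. This gives an injective homomorphism $\PGL(3,3)\rtimes C_2\to\Aut(C^*)$, injective because a correlation fixing every point and every line is trivial.

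\textbf{Second inclusion: every automorphism is a correlation.} Let $\sigma\in\Aut(C^*)$. The construction of Sections~\ref{sec:enumerator}--\ref{sec:projective-plane} is intrinsic: $C_0$, the shadow $S$, its two cosets $C_1$ and $C_3$, and the set $M$ of weight-$5$ shadow words are all defined from $C$ alone. Hence $\sigma$ preserves $M$ and either fixes or swaps $C_1$ and $C_3$; it therefore preserves the unordered pair $\{M_1,M_3\}$.

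The function $r$ and the sets $P$, $L$ are defined from $M_1$. If $\sigma$ fixes $M_1$, then it preserves $P$ and $L$. It also preserves the incidence relation, since $p\in\ell$ iff $\ell\in B_{x_p}$, so $\sigma$ induces a collineation. If $\sigma$ swaps $M_1$ and $M_3$, then $r$ is replaced by $5-r$, so $\sigma$ swaps $P$ and $L$. By the symmetric description \eqref{eq:stars} it maps stars to stars, and so induces a correlation.

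An automorphism is a coordinate permutation, and $P\sqcup L$ is exactly the coordinate set. So $\sigma$ is determined by its action, and that action is the correlation itself. This yields an injection $\Aut(C^*)\to$ (correlation group).

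\textbf{Main obstacle.} The point needing care is checking that the swap case is genuinely realized, i.e.\ that $\PG(2,3)$ admits a polarity. For $\PG(2,q)$ this is the standard map $\langle v\rangle\mapsto v^\perp$. With the Singer labeling it can also be made explicit as $i\mapsto -i$, since $\{0,1,3,9\}$ yields a symmetric incidence after negation.

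We also need the group structure. The correlation group is a split extension because any polarity is an involution. Its identification with $\PGL(3,3)\rtimes C_2$, with the inverse-transpose action, together with the fact $|\PGL(3,3)|=5616$, is standard.
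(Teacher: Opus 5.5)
Your proposal is correct and follows essentially the same route as the paper: the intrinsic unordered pair $\{M_1,M_3\}$ forces every automorphism to induce a collineation or a duality of the recovered plane, and conversely collineations preserve $A$ while dualities act because $A^{-1}=A^T$. The only cosmetic difference is that you realize the duality through a polarity that makes $N$ symmetric, whereas the paper notes that $[I\mid A]$ and $[A^T\mid I]$ have the same row space; the split extension structure and the order $2\cdot 5616=11232$ are then obtained in the same way.
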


\begin{proof}
By Lemma~\ref{lem:intersection}, two distinct minimal shadow blocks meet in one coordinate exactly when they belong to the same half-shadow.  Thus the unordered pair $\{M_1,\,M_3\}$ is intrinsic.  Proposition~\ref{prop:point-line-stars} then recovers the distinguished coordinate of every star and the point-line incidence relation.  Every code automorphism consequently induces either a collineation or a duality of $\PG(2,3)$.

Conversely, every collineation preserves the non-incidence matrix $A$ and hence the code $C^*$.  A duality interchanges the point and line coordinates and sends $[I\mid A]$ to $[A^T\mid I]$.  These two matrices have the same row space because $A^{-1}=A^T$.  Hence all collineations and dualities act on $C^*$.

The collineation group is $\PGL(3,3)$, since  $\mathbb F_3$ has no
nontrivial field automorphism, and a polarity supplies a splitting
element of order $2$. This element acts on $\PGL(3,3)$ by the graph
automorphism induced by point-line duality. Finally,
\[
        |\PGL(3,3)|
        =\frac{(3^3-1)(3^3-3)(3^3-3^2)}{2}
        =5616.
\]
Adjoining a duality therefore gives
\[
        \Aut(C^*)\cong\PGL(3,3)\rtimes C_2,
        \qquad
        |\Aut(C^*)|=11232.
\]
\end{proof}


\section{The odd Golay code descent}\label{sec:golay-proof}

The \emph{odd Golay code}, which we denote by $\mathcal G_{24}^{\mathrm{odd}}$, is the unique Type~I self-dual binary $[24,12,6]$ code \cite{PlessSloane1974,RainsSloaneSelfDual}.

Let $F=F(p,\ell)=\{p,\ell\}$ be an intrinsic flag, and let $\punct_F$ denote puncturing in the two coordinates of $F$.  Put
\[
\begin{aligned}
 H_F&=\{c\in C:c_p=c_\ell\}, & D_F&=\punct_F(H_F),\\
 K_F&=\{c\in C:c_p\ne c_\ell\}, & R_F&=\punct_F(K_F).
\end{aligned}
\]
Thus $D_F$ is a length-$24$ code and $R_F$ is one of its cosets.

\begin{proposition}[Flag deletion]\label{prop:flag-deletion}
The code $D_F$ is equivalent to $\mathcal G_{24}^{\mathrm{odd}}$, and $R_F$ has minimum weight $5$ and exactly $24$ words of weight $5$.
\end{proposition}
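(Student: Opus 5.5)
The plan is to show directly that $D_F$ is a self-dual $[24,12,6]$ code of Type~I, and then to invoke the uniqueness of $\mathcal G_{24}^{\mathrm{odd}}$ quoted at the start of this section. The statement about $R_F$ then comes from the weight-$6$ counts of Lemma~\ref{lem:harmonic-design-consequences} and Proposition~\ref{prop:intrinsic-flags}.

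First, dimensions and self-duality. The map $c\mapsto c_p+c_\ell$ is a linear functional on $C$. It is nonzero: otherwise $e_p+e_\ell\in C^\perp=C$, giving a word of weight $2$. Hence $H_F$ has dimension $12$ and $K_F=C\setminus H_F$ is its nontrivial coset. Puncturing at $F$ is injective on $H_F$, because its kernel on $H_F$ is contained in $\{0,e_p+e_\ell\}$ and $e_p+e_\ell\notin C$. Thus $\dim D_F=12$. For $c,c'\in H_F$ the contribution of $F$ to $c\cdot c'$ is $c_pc'_p+c_\ell c'_\ell=2c_pc'_p=0$. So $\punct_F(c)\cdot\punct_F(c')=c\cdot c'=0$, and $D_F$ is self-orthogonal of dimension $12$ in $\F^{24}$, hence self-dual. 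In the same way $R_F=\punct_F(K_F)$ is a coset of $D_F$.

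Next, weights. For $c\in H_F$ one has $\wt(\punct_F c)=\wt(c)-2c_p$. A word of $D_F$ of weight at most $4$ would therefore come from a word of $C$ of weight at most $6$ containing both coordinates of $F$, that is, from a word of $\mathcal W$ containing the flag. Proposition~\ref{prop:intrinsic-flags} excludes exactly this, so $d(D_F)\ge6$.

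To see that $D_F$ is Type~I, I count the words of $\mathcal W$ avoiding $F$. By Lemma~\ref{lem:harmonic-design-consequences} each coordinate lies in $12$ words of $\mathcal W$, and by Proposition~\ref{prop:intrinsic-flags} no word contains both $p$ and $\ell$. Hence exactly $24$ words of $\mathcal W$ meet $F$, each in one coordinate, and $28$ avoid $F$. The latter lie in $H_F$ and puncture to weight-$6$ words, so $D_F$ is a Type~I self-dual $[24,12,6]$ code. Uniqueness of the odd Golay code then identifies $D_F$ with $\mathcal G_{24}^{\mathrm{odd}}$.

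For $R_F$, every $c\in K_F$ contains exactly one coordinate of $F$, so $\wt(\punct_F c)=\wt(c)-1$, which is odd. Since $d(C)=6$, every word of $R_F$ has weight at least $5$. Weight exactly $5$ arises precisely from the words of $\mathcal W$ meeting $F$ in one coordinate, and these are the $24$ words counted above. Puncturing is injective on $K_F$, because it is injective on $H_F$ and $K_F$ is a coset of $H_F$. Hence $R_F$ has exactly $24$ words of weight $5$.

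The only step that relies on the preceding structure theory is the exclusion of weight-$6$ words containing a whole flag. That is supplied by Proposition~\ref{prop:intrinsic-flags}, and I expect no real obstacle beyond citing it.
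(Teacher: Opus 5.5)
Your proof is correct and follows the paper's argument essentially step by step. The shared steps are the nonzero functional $c\mapsto c_p+c_\ell$, injectivity of puncturing, cancellation of the flag contributions, exclusion of flag-containing weight-$6$ words via Proposition~\ref{prop:intrinsic-flags}, and the $12+12$ count for $R_F$. The only cosmetic difference is how you show Type~I: you count the $28$ words of $\mathcal W$ avoiding $F$, whereas the paper simply exhibits $x_p+z_\ell$, whose support $\St(p)\triangle\St(\ell)$ misses the flag.
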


\begin{proof}
The equation $c_p=c_\ell$ is a nonzero linear condition on $C$.  Otherwise $e_p+e_\ell$ would be orthogonal to $C$ and hence belong to $C=C^\perp$, contrary to $d(C)=6$.  Thus $\dim H_F=12$.  Puncturing on $F$ is injective on $C$, and hence on $H_F$, since a nonzero word in the kernel would have support contained in $F$.  For two words of $H_F$, the two deleted contributions to their inner product are equal and cancel over $\F$.  Therefore $D_F$ is self-orthogonal of dimension $12$, and hence self-dual.

A nonzero word of $D_F$ of weight below $6$ has weight $2$ or $4$.  Its lift would have weight below $6$ in $C$, except possibly when a weight-$4$ word lifts to a weight-$6$ word containing both coordinates of $F$.  Proposition~\ref{prop:intrinsic-flags} excludes this case.  On the other hand, $x_p+z_\ell$ has weight $6$ and vanishes on $F$.  Thus $D_F$ is a Type~I self-dual $[24,12,6]$ code, and hence
$D_F\cong\mathcal G_{24}^{\mathrm{odd}}$ by the uniqueness recalled above.

Every word in $K_F$ has exactly one nonzero flag coordinate, so puncturing lowers its weight by one and $R_F$ has minimum weight at least $5$.  By Lemma~\ref{lem:harmonic-design-consequences}, exactly twelve weight-$6$ words contain $p$, and exactly twelve contain $\ell$.  Proposition~\ref{prop:intrinsic-flags} says that these two sets are disjoint.  Their punctures give $24$ distinct words of weight $5$ in $R_F$.  Conversely, every weight-$5$ word of $R_F$ lifts to a weight-$6$ word in one of these two sets.  Hence $R_F$ has minimum weight $5$ and exactly $24$ words of that weight.
\end{proof}

We use the following coset facts for $D=\mathcal G_{24}^{\mathrm{odd}}$.  Its automorphism group is the sextet group
\begin{equation}\label{eq:odd-golay-cosets}
 \Aut(D)\cong 2^6\!:\!(3\!\cdot\!S_6),
 \qquad |\Aut(D)|=138240,
\end{equation}
and its covering radius is $5$.  The $640$ cosets of minimum weight $5$ form a single $\Aut(D)$-orbit.  Thus they are the deep hole cosets, and a deep hole coset stabilizer has order
\[
        138240/640=216.
\]
These facts are standard consequences of the MOG/sextet description of the odd Golay code; see \cite{PlessSloane1974,RainsSloaneSelfDual} for the code and its sextet group, and compare the orbit-and-coset method in \cite{ConwaySloaneCosets}.

Let $R=r+D$ be a deep hole coset, with $\wt(r)=5$, and define
\[
 \alpha_R:D\longrightarrow\F,
 \qquad \alpha_R(d)=r\cdot d.
\]
The functional $\alpha_R$ is independent of the chosen representative $r$ of $R$, since $D=D^\perp$.  Put
\begin{equation}\label{eq:extension-EDR}
 \widehat D_R=\{(\alpha_R(d),d,\alpha_R(d)):d\in D\},
 \qquad
 E(D,R)=\widehat D_R+\langle(1,r,0)\rangle.
\end{equation}
Suppose that $r'=r+e$ is another representative of $R$, with $e\in D$. 
If $\alpha_R(e)=0$, the two choices of extra generator differ by $(0,e,0)\in\widehat D_R$.  If $\alpha_R(e)=1$, then after interchanging the two new coordinates the two choices differ by $(1,e,1)\in\widehat D_R$.
Thus the equivalence class of $E(D,R)$ depends only on~$R$.

\begin{proposition}[Extension and recovery]\label{prop:golay-extension}
The code $E(D,R)$ is a Type~I self-dual binary $[26,13,6]$ code.  For the pair $(D_F,R_F)$ obtained from a flag of $C$, this construction recovers $C$, after ordering the two restored coordinates suitably.  Moreover, every equivalence of pairs $(D,R)\to(D',R')$ extends to an equivalence $E(D,R)\to E(D',R')$.
\end{proposition}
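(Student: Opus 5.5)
The proof has three parts, which I take in order: the properties of $E(D,R)$, recovery of $C$ from a flag, and transport of equivalences.

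\emph{Self-duality and dimension.} I order the coordinates as (new coordinate $a$, the $24$ old ones, new coordinate $b$). For $d,d'\in D$ the inner product of $(\alpha_R(d),d,\alpha_R(d))$ and $(\alpha_R(d'),d',\alpha_R(d'))$ is $2\alpha_R(d)\alpha_R(d')+d\cdot d'=0$. The inner product of $(1,r,0)$ with such a word is $\alpha_R(d)+r\cdot d=0$, and $(1,r,0)$ has weight $6$, so it is orthogonal to itself. Thus $E(D,R)$ is self-orthogonal. Since $r\notin D$, the word $(1,r,0)$ is not in $\widehat D_R$, so $E(D,R)$ has dimension $13$ and is self-dual.

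\emph{Minimum weight.} The words of $E(D,R)$ are of two kinds. A word $(\alpha,d,\alpha)$ with $d\ne0$ has weight at least $6$. A word $(1+\alpha(d),\,r+d,\,\alpha(d))$ has weight $\wt(r+d)+1$, and this is at least $6$ because $R$ has minimum weight $5$. The word $(1,r,0)$ has weight $6$, so $d=6$ and the code is Type~I; hence it is a Type~I $[26,13,6]$ code.

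\emph{Recovery.} Take $(D,R)=(D_F,R_F)$ with $a=p$ and $b=\ell$, so each $c\in H_F$ is $(c_p,\punct_F c,c_p)$. The main point is to show $c_p=\alpha_{R}(\punct_F c)$. Choose $k\in K_F$ with $k_p=1$ and $k_\ell=0$; such a $k$ exists by Proposition~\ref{prop:flag-deletion}, since twelve weight-$6$ words contain $p$ and none of them contains $\ell$. Put $r=\punct_F k$. Then $0=k\cdot c=c_p+r\cdot\punct_F c$, so $c_p=\alpha_R(\punct_F c)$. Hence $H_F=\widehat D_R$ and $C=H_F+\langle k\rangle=E(D_F,R_F)$. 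A different choice of representative only swaps $a,b$, as already noted before the statement. The weight-$5$ representative required in the definition is also available, by Proposition~\ref{prop:flag-deletion}.

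\emph{Equivariance.} Let $\sigma$ be a coordinate permutation with $\sigma D=D'$ and $\sigma R=R'$. Then $\alpha_{R'}(\sigma d)=\sigma r\cdot\sigma d=\alpha_R(d)$. So $\sigma$, extended by the identity on $a,b$, maps $\widehat D_R$ to $\widehat D'_{R'}$ and $(1,r,0)$ to $(1,\sigma r,0)$. By the representative-independence shown before the statement, the image is $E(D',R')$ up to swapping $a$ and $b$.

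The main obstacle is the recovery step: identifying the flag coordinate values with the functional $\alpha_R$, and producing a representative $k$ with the right pattern on $F$. I would handle it through the orthogonality computation above.
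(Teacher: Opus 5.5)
Your proposal is correct and follows the paper's proof essentially step for step. You use the same inner-product checks for self-duality, the same two-case weight count, and the same recovery argument: a weight-$6$ word of $K_F$ with deleted part $(1,0)$ is orthogonal to the lifts $(\varepsilon,d,\varepsilon)$, which forces $\varepsilon=\alpha_{R_F}(d)$. Equivariance then comes from transporting $r$ to $\sigma r$ and using representative-independence; the only cosmetic difference is that you exclude $(1,r,0)\in\widehat D_R$ via $r\notin D$ rather than via the unequal new coordinates.
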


\begin{proof}
For $d$ and $e$ in $D$, the two new-coordinate contributions to the inner product of their lifts cancel, so $\widehat D_R$ is a self-orthogonal copy of $D$.  Moreover,
\[
 (1,r,0)\cdot(\alpha_R(d),d,\alpha_R(d))
   =\alpha_R(d)+r\cdot d=0,
 \qquad
  (1,r,0)\cdot(1,r,0)=1+\wt(r)=0
\]
in $\F$.  The vector $(1,r,0)$ does not belong to $\widehat D_R$, whose two new coordinates are equal.  Hence $E(D,R)$ is self-orthogonal of dimension $13$, and therefore self-dual.

A nonzero word in $\widehat D_R$ has weight
\[
        \wt(d)+2\,\alpha_R(d)\ge6,
\]
while a word in the other half has weight
\[
        1+\wt(r+d)\ge6.
\]
The second inequality uses the fact that $R$ has minimum weight $5$.  Equality occurs for $d=0$ in the second half because $\wt(r)=5$.  Thus $E(D,R)$ has minimum weight $6$ and is Type~I.

Now take $(D,R)=(D_F,R_F)$.  Choose a weight-$6$ word $g\in K_F$, order the deleted coordinates so that its deleted part is $(1,0)$, and put $r=\punct_F(g)$.  Each $d\in D_F$ has a unique lift $(\vareps,d,\vareps)\in H_F$, with $\vareps\in\F$.  Self-orthogonality of $C$ gives
\[
        0=g\cdot(\vareps,d,\vareps)=\vareps+r\cdot d.
\]
Thus $\vareps=\alpha_{R_F}(d)$, so $H_F=\widehat D_{R_F}$ after restoring the two coordinates.  Adjoining $g=(1,r,0)$ recovers the other half of $C$.

Finally, let $\sigma:(D,R)\to(D',R')$ be an equivalence of pairs, and choose $r'=\sigma(r)$.  Acting by $\sigma$ on the middle coordinates and fixing the two new coordinates carries the construction defined by $r$ to the one defined by $r'$.  Choosing another representative of $R'$ changes the result only by the possible interchange of the new coordinates described above.
\end{proof}

\begin{theorem}[Odd-Golay reconstruction]\label{thm:unique-golay}
All codes $E(D,R)$, with $R$ a deep hole coset of the odd Golay code, are equivalent, and every Type~I self-dual $[26,13,6]$ code is recovered from any of its intrinsic flags. Moreover, $\Aut(C)$ is transitive on the $52$ intrinsic flags and
\[
        \Stab_{\Aut(C)}(F)\cong\Stab_{\Aut(D_F)}(R_F).
\]
Consequently,
\[
        |\Aut(C)|=52\cdot 216=11232.
\]
\end{theorem}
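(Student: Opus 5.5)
The proof combines Proposition~\ref{prop:flag-deletion}, Proposition~\ref{prop:golay-extension}, and the coset facts~\eqref{eq:odd-golay-cosets}, and it does not use the projective plane reconstruction of Section~\ref{sec:plane-proof}.

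\emph{Uniqueness of $E(D,R)$.} Since the $640$ deep hole cosets of $D=\mathcal G_{24}^{\mathrm{odd}}$ form one $\Aut(D)$-orbit, any two pairs $(D,R)$ and $(D,R')$ are equivalent. The last clause of Proposition~\ref{prop:golay-extension} then makes $E(D,R)\cong E(D,R')$. Now let $C$ be any Type~I self-dual $[26,13,6]$ code and let $F$ be an intrinsic flag. By Proposition~\ref{prop:flag-deletion}, $D_F\cong\mathcal G_{24}^{\mathrm{odd}}$. The coset $R_F$ has minimum weight $5$, which equals the covering radius, so $R_F$ is a deep hole coset. Proposition~\ref{prop:golay-extension} gives $C=E(D_F,R_F)$. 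Hence every such code is equivalent to one fixed $E(D,R)$, and it is recovered from any of its flags.

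\emph{Transitivity on flags.} Let $F$ and $F'$ be flags of $C$. Choose $\sigma\in\Aut(D)$-type equivalences $D_F\to\mathcal G^{\mathrm{odd}}_{24}\leftarrow D_{F'}$ carrying $R_F$ and $R_{F'}$ to the same deep hole coset; this is possible by single-orbit transitivity. This yields an equivalence of pairs $(D_F,R_F)\to(D_{F'},R_{F'})$. By Proposition~\ref{prop:golay-extension}, it extends to a map $C=E(D_F,R_F)\to E(D_{F'},R_{F'})=C$. This map sends the two restored coordinates $F$ onto $F'$, so it is an automorphism of $C$ mapping $F$ to $F'$. Here one has to check that the extension really lands on the same code $C$ with the restored coordinates placed at $F'$, possibly after swapping $p'$ and $\ell'$. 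That swap is allowed, since the extended map only needs to carry the set $F$ onto the set $F'$. Intrinsic flags are canonically determined by $C$ through Section~\ref{sec:projective-plane}, so automorphisms permute them. Hence $\Aut(C)$ acts transitively on the $52$ flags of Proposition~\ref{prop:intrinsic-flags}.

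\emph{Stabilizer isomorphism.} This is the main step. Define $\Phi:\Stab_{\Aut(C)}(F)\to\Stab_{\Aut(D_F)}(R_F)$ by restricting to the $24$ coordinates outside $F$. An element $g$ stabilizing $F$ setwise preserves the condition $c_p=c_\ell$, so it preserves $H_F$ and $K_F$, and therefore induces an automorphism of $D_F$ fixing $R_F$. Surjectivity follows from the extension clause of Proposition~\ref{prop:golay-extension} applied with $(D',R')=(D,R)$. For injectivity, suppose $g$ acts trivially off $F$. Then $g$ is either the identity or the transposition of $p$ and $\ell$. The transposition is not an automorphism: the twelve weight-$6$ words through $p$ avoid $\ell$ (Proposition~\ref{prop:intrinsic-flags}), so swapping would carry $x_p+z_{\ell'}$-type words out of the code. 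More simply, the swap would force $e_p+e_\ell\in C^\perp=C$, which is impossible. Once $\Phi$ is a bijective homomorphism, the orbit--stabilizer theorem gives $|\Aut(C)|=52\cdot|\Stab_{\Aut(D)}(R)|=52\cdot 216=11232$.

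The subtle points are all in the bookkeeping of the last two steps: the two new coordinates may be swapped depending on the choice of representative, and one must make sure the extended equivalence is a genuine automorphism of $C$ sending $F$ onto $F'$, and that $\Phi$ is injective.
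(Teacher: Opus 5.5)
Your proposal is correct and follows the paper's proof essentially step for step. The steps match: uniqueness from the single $\Aut(D)$-orbit of deep hole cosets plus the extension clause, flag-transitivity by extending an equivalence $(D_F,R_F)\to(D_{F'},R_{F'})$, the stabilizer isomorphism by restriction, and orbit--stabilizer with $138240/640=216$. For injectivity, the paper makes your ``$e_p+e_\ell\in C$'' step explicit: it adds a word of $K_F$ to its image under the swap. For surjectivity, the paper extends $\sigma$ directly via $\delta=\alpha_{R_F}(e)$ with $\sigma r=r+e$, while you cite the extension clause with $(D',R')=(D_F,R_F)$; that clause's proof is exactly this computation, so the difference is only cosmetic.
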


\begin{proof}
The odd Golay code is unique and its deep hole cosets form one automorphism orbit, so Proposition~\ref{prop:golay-extension} gives one equivalence class of extensions.  Conversely, the recovery statement in that proposition reconstructs $C$ from every intrinsic flag.

Restriction to the undeleted coordinates gives a homomorphism
\[
        \Stab_{\Aut(C)}(F)\longrightarrow\Stab_{\Aut(D_F)}(R_F).
\]
It is injective: an element in its kernel could only interchange $p$ and $\ell$, but for $g\in K_F$ the sum of $g$ and its image would then be $e_p+e_\ell\in C$, contrary to $d(C)=6$.

Conversely, write $R_F=r+D_F$ and let $\sigma\in\Stab_{\Aut(D_F)}(R_F)$.  Write $\sigma r=r+e$ with $e\in D_F$ and put $\delta=\alpha_{R_F}(e)$.  Since $\sigma^{-1}r+r\in D_F=D_F^\perp$,
\[
        \alpha_{R_F}(\sigma d)=r\cdot\sigma d=\alpha_{R_F}(d)
        \qquad(d\in D_F).
\]
Extend $\sigma$ to the restored coordinates by fixing them if $\delta=0$ and interchanging them if $\delta=1$.  Then $\widehat D_{R_F}$ is preserved and $(1,r,0)$ is sent to
\[
        (1,r,0)+(\delta,e,\delta),
\]
so \eqref{eq:extension-EDR} is preserved.  This proves the stabilizer isomorphism.

For two flags $F$ and $F'$, choose an equivalence $\varphi:D_F\to D_{F'}$ carrying $R_F$ to $R_{F'}$.  Taking $r'=\varphi(r)$, extend $\varphi$ by sending the two restored coordinates for $F$ to the corresponding restored coordinates for $F'$.  This gives an equivalence
$E(D_F,R_F)\longrightarrow E(D_{F'},R_{F'})$.

After applying the recovery identifications, this is an automorphism of $C$ carrying $F$ to $F'$.
Thus $\Aut(C)$ is flag-transitive.  The stabilizer has order $216$ by \eqref{eq:odd-golay-cosets}, and there are $52$ flags by Proposition~\ref{prop:intrinsic-flags}.
\end{proof}

\begin{remark}[The deep hole coset stabilizer]\label{rem:deep-stabilizer}
Under the plane identification of Section~\ref{sec:plane-proof}, the common stabilizer in Theorem~\ref{thm:unique-golay} is
\[
        3_+^{1+2}\!:\!D_8,
\]
where $3_+^{1+2}$ is the extraspecial group of order $27$ and exponent $3$, and $D_8$ is dihedral of order $8$.  Indeed, the collineation stabilizer of a point-line flag is the upper-triangular subgroup \hbox{$3_+^{1+2}\!:\!(C_2\times C_2)$,} and a duality interchanging the point and line extends $C_2\times C_2$ to $D_8$.
\end{remark}

\begin{proof}[Proof of Theorem~\ref{thm:main}]
Existence and uniqueness follow from either Section~\ref{sec:plane-proof} or Section~\ref{sec:golay-proof}.  Proposition~\ref{prop:aut-plane} identifies the automorphism group, while Theorem~\ref{thm:unique-golay} independently gives its order and its flag stabilizer from the odd-Golay side.
\end{proof}



\begin{thebibliography}{CPS92}

\small

\bibitem[AM69]{AssmusMattson1969}
E.~F.~Assmus, Jr.\ and H.~F.~Mattson, Jr.,
\emph{New 5-designs},
J. Combin. Theory \textbf{6} (1969), 122--151.

\bibitem[Bac99]{Bachoc1999}
C.~Bachoc,
\emph{On harmonic weight enumerators of binary codes},
Des. Codes Cryptogr. \textbf{18} (1999), 11--28.

\bibitem[BG04]{BachocGaborit2004}
C.~Bachoc and P.~Gaborit,
\emph{Designs and self-dual codes with long shadows},
J. Combin. Theory Ser. A \textbf{105} (2004), 15--34.

\bibitem[Bor84]{BorcherdsThesis}
R. E. Borcherds, \emph{The Leech lattice and other lattices},
Ph.D. thesis, Trinity College, Cambridge, 1984;
corrected electronic version, arXiv:math/9911195, 1999.

\bibitem[Buy99]{Buyuklieva1999}
S.~Buyuklieva,
\emph{New binary extremal self-dual codes of lengths 50 and 52},
Serdica Math. J. \textbf{25} (1999), 185--190.

\bibitem[CS90a]{ConwaySloaneCosets}
J.~H.~Conway and N.~J.~A.~Sloane,
\emph{Orbit and coset analysis of the Golay and related codes},
IEEE Trans. Inform. Theory \textbf{36} (1990), 1038--1050.

\bibitem[CS90b]{ConwaySloaneBound}
J.~H.~Conway and N.~J.~A.~Sloane,
\emph{A new upper bound on the minimal distance of self-dual codes},
IEEE Trans. Inform. Theory \textbf{36} (1990), 1319--1333.

\bibitem[CP80]{ConwayPless1980}
J.~H.~Conway and V.~Pless,
\emph{On the enumeration of self-dual codes},
J. Combin. Theory Ser. A \textbf{28} (1980), 26--53.

\bibitem[CPS92]{CPS1992}
J.~H.~Conway, V.~Pless, and N.~J.~A.~Sloane,
\emph{The binary self-dual codes of length up to 32: a revised enumeration},
J. Combin. Theory Ser. A \textbf{60} (1992), 183--195.

\bibitem[Del73]{Delsarte1973}
P.~Delsarte,
\emph{An algebraic approach to the association schemes of coding theory},
Ph.D. thesis, Universit\'e Catholique de Louvain, 1973; Philips Research Reports Supplements \textbf{10} (1973), 1--97.

\bibitem[Dem68]{Dembowski1968}
P.~Dembowski,
\emph{Finite Geometries},
Ergebnisse der Mathematik und ihrer Grenzgebiete, Band~44,
Springer-Verlag, Berlin, 1968.

\bibitem[Dou05]{Dougherty2005}
S.~T.~Dougherty,
\emph{A new construction of self-dual codes from projective planes},
Australas. J. Combin. \textbf{31} (2005), 337--348.

\bibitem[Gly91]{Glynn1991}
D.~G.~Glynn,
\emph{The construction of self-dual binary codes from projective planes of odd order},
Australas. J. Combin. \textbf{4} (1991), 277--284.

\bibitem[Har97]{Harada1997}
M.~Harada,
\emph{Weighing matrices and self-dual codes},
Ars Combin. \textbf{47} (1997), 65--73.

\bibitem[Hoh97]{HoehnShadow}
G.~H\"ohn, \emph{Self-dual vertex operator superalgebras with shadows of large
minimal weight}, Internat. Math. Res. Notices 1997, no.~13, 613--621.

\bibitem[Hoh08]{HoehnConformal}
G.~H\"ohn,
\emph{Conformal designs based on vertex operator algebras},
Adv. Math. \textbf{217} (2008), 2301--2335.

\bibitem[Kin01]{King01}
O.~D. King,
\emph{The mass of extremal doubly-even self-dual codes of length $40$},
IEEE Trans. Inform. Theory \textbf{47} (2001), 2558--2560.

\bibitem[MS77]{MS}
F.~J.~MacWilliams and N.~J.~A.~Sloane,
\emph{The Theory of Error-Correcting Codes},
North-Holland, Amsterdam, 1977.

\bibitem[Ple78]{Pless1978}
V.~Pless,
\emph{The children of the $(32,16)$ doubly even codes},
IEEE Trans. Inform. Theory \textbf{24} (1978), 738--746.

\bibitem[PS74]{PlessSloane1974}
V.~Pless and N.~J.~A.~Sloane,
\emph{Binary self-dual codes of length 24},
Bull. Amer. Math. Soc. \textbf{80} (1974), 1173--1178.

\bibitem[PS75]{PlessSloane1975}
V.~Pless and N.~J.~A.~Sloane,
\emph{On the classification and enumeration of self-dual codes},
J. Combin. Theory Ser. A \textbf{18} (1975), 313--335.

\bibitem[Rai98]{Rains1998}
E.~M.~Rains,
\emph{Shadow bounds for self-dual codes},
IEEE Trans. Inform. Theory \textbf{44} (1998), 134--139.

\bibitem[RS98]{RainsSloaneSelfDual}
E.~M.~Rains and N.~J.~A.~Sloane,
\emph{Self-dual codes},
in V.~S. Pless and W.~C. Huffman (eds.), \emph{Handbook of Coding Theory},
Elsevier, Amsterdam, 1998, 177--294.


\bibitem[Ven84]{Venkov}
B. B. Venkov, \emph{Even unimodular extremal lattices},
Trudy Mat. Inst. Steklov. \textbf{165} (1984), 43--48;
English transl., Proc. Steklov Inst. Math. \textbf{165} (1985), 47--52.


\end{thebibliography}
\end{document}